\theoremstyle{plain}
\newtheorem{theorem}{Theorem}
\newtheorem{lemma}{Lemma}
\newtheorem{remark}{Remark}
\newtheorem{corollary}{Corollary}
\author{Istv\'an Blahota}
\address{Institute of Mathematics and Computer Sciences\\
	University of Ny\'\i \-regyh\'aza\\
	H-4400 Ny\'\i regyh\'aza, P.O. Box 166\\
	Hungary}
\email{blahota.istvan@nye.hu}
\author{D\'ora Nagy}
\address{Institute of Mathematics and Computer Sciences\\
	University of Ny\'\i \-regyh\'aza\\
	H-4400 Ny\'\i regyh\'aza, P.O. Box 166\\
	Hungary}
\email{nagy.dora@nye.hu}
\title[Approximation by matrix transform means\dots]{Approximation by matrix transform means with respect to the Walsh system in Lebesgue spaces}
\keywords{Fourier series, Walsh-Paley system, rate of approximation, matrix transform means}
\subjclass{42C10}
\begin{document}

\begin{abstract} In this paper, we improve, complement and generalize (from N\"or\-lund to matrix transform means) a result of M\'oricz and Siddiqi \cite{MS} and some statements of Areshidze and Tephnadze \cite{AT}, and (from $T$ (weighted) to matrix transform means) Anakidze, Areshidze, Persson and Tephnadze \cite{AAPT}.
\end{abstract}

\maketitle

\section{Notation and definitions}

Let $\mathbb{P}$ be the set of positive natural numbers and $\mathbb{N}:=\mathbb{P}\cup \{0\}$.
Let denote the discrete cyclic group of order
$2$ by $\mathbb{Z}_2$. The group operation is the modulo $2$ addition. Let every subset
be open. The normalized Haar measure $\mu$ on $\mathbb{Z}_{2}$ is given
in the way that $\mu (\{ 0\})=\mu (\{ 1\})=1/2$. That is, the measure of a singleton is $1/2$. $G :=
\overset{\infty}{\underset{k=0}{{\times}}} \mathbb{Z}_{2},$
$G$
is called the dyadic group. The elements of dyadic group $G$ are the $0,1$ sequences. That is,
$x=(x_0,x_1,\dots,x_k,\dots)$ with $x_k\in \{0,1\}\ (k\in \mathbb{N}).$

The group operation on $G$ is the coordinate-wise addition (denoted by $+$),
the normalized Haar measure  $\mu $ is the product measure and the topology is the product topology. For an other topology on the dyadic group see e.g. \cite{BSU}.

Dyadic intervals are defined in the usual way
\[
I_0(x):=G,\ \ I_n(x):=\{y\in G : y=(x_0,\dots,x_{n-1},y_n,y_{n+1},\dots)\}
\]
for $x\in G, n\in\mathbb{P}$ and denote $I_{n}:=I_{n}(0)$. Intervals form a base for the neighbourhoods of $G$. 

Let $L_{p}(G)$ denote the usual Lebesgue spaces on $G$ (with the
corresponding norm $\Vert .\Vert_p$).

For the sake of brevity in notation, we agree to write $L_{\infty}(G)$ instead of $C(G)$ (the space of continuous functions on $G$) and set
$\Vert f\Vert_\infty:=\sup\{ |f(x)|: x\in G\}.$ Of course, it is clear that the space $L_{\infty}$ is not the same as the space of continuous functions, i.e. it is a proper subspace of it. But since in the case of continuous functions the supremum norm and the $L_{\infty}$ norm are the same, for convenience we hope the reader will be able to tolerate this simplification in notation.

Now, we introduce some concepts of Walsh-Fourier analysis.
The Rade\-macher functions are defined as
\[
r_n(x):=(-1)^{x_n} \quad
(x\in G, n\in\mathbb{N}).
\]
The sequence of the Walsh-Paley functions is the product system of the Rade\-macher functions. Namely,
every natural number $n$ can uniquely be expressed in the number system based $2$, in the form
\[
n=\sum_{k=0}^\infty n_k2^k,\quad  n_k\in \{ 0,1\} \ (k\in\mathbb{N}),
\] where only a finite number of $n_k$'s different from zero. 

Let the order of $n\in\mathbb{P}$ be denoted by $\vert n\vert :=\max \{ j\in\mathbb{N}: n_j\neq 0\}.$  It means $2^{|n|}\leq n< 2^{|n|+1}$. The Walsh-Paley functions are $w_0(x):=1$ and for $n\in\mathbb{P}$
\[
w_n(x):=\prod_{k=0}^{\infty}r_k^{n_k}(x)
=(-1)^{\sum_{k=0}^{|n|}n_{k}x_{k}}.
\]
Let ${\mathcal P}_n$ be the collection of Walsh  polynomials of order less than $n$, that is, functions of the form
\[
P(x)=\sum_{k=0}^{n-1}a_kw_k(x),
\]
where $n\in\mathbb{P}$ and $\{a_k\}$ is a sequence of complex numbers.

It is known \cite{hew} that the Walsh-Paley system $(w_n, n\in\mathbb{N})$ is the character system of $(G,+)$.

The $L_{p}(G)$ modulus of continuity are defined by
\[
\omega_p(f,\delta):=\sup_{|t|<\delta}\left\Vert f(.+t)-f(.)\right\Vert_p,
\]
for $f\in L_{p}(G)$, where $\delta>0$ with the notation
\[
\lvert x\rvert:=\sum_{i=0}^\infty \frac{x_i}{2^{i+1}} \quad \textrm{for all }x\in G.
\]
In the case $f\in C(G)$ we change $p$ by $\infty$.

The $j$th Fourier-coefficient, the $k$th partial sum of the Fourier series and the $n$th Dirichlet kernel is defined by
\[
	\hat f(j):=\int_{G} f(x)w_{j}(x)d\mu(x),\]
\[
S_{k}(f):=\sum_{j=0}^{k-1}\hat f(j)w_{j},\]
\[D_{n}:=
\sum_{k=0}^{n-1}w_{k},\ D_{0}:=0.
\]

Fej\'er kernels are defined as the arithmetical means of Dirichlet kernels and Fej\'er means are defined as the arithmetical means of partial sums of the Fourier series, that is,
\[
K_{n}:=\frac{1}{n}\sum_{k=1}^{n}D_{k}
\]
and
\[
\sigma_{n}(f):=\frac{1}{n}\sum_{k=1}^{n}S_{k}(f).
\]

Let $\{ q_k:k\in\mathbb{N}\}$ be a sequence of non-negative numbers. The $n$th N\"orlund mean of the Walsh-Fourier series is  defined by
\[
t_{n}(f):=\frac{1}{Q_n}\sum_{k=1}^{n}q_{n-k}S_k(f),
\]
where $Q_n:=\sum_{k=0}^{n-1}q_k$ $(n\in\mathbb{P})$. It is  always assumed that $q_0>0$ and 
\begin{equation}\label{Nor}
	\lim_{n\to\infty}Q_n=\infty.
\end{equation}
In this case, the summability method generated by $\{q_{k}\}$ is regular if and only if
\begin{equation}\label{reg}
	\lim_{n\to\infty}\frac{q_{n-1}}{Q_{n}}=0.
\end{equation}
As to this notion and result, we refer the reader to \cite{Moore}, pp. 37-38.

If $q_{k}=1/(k+1)$, we get the so-called N\"orlund logarithmic mean of the function $f$.

Let $\{ p_k: k\in\mathbb{P}\}$ be a sequence of nonnegative numbers. The $n$th $T$ (weighted) mean of Vilenkin-Fourier series is defined by
\[
	T_{n}(f):=\frac{1}{P_n}\sum_{k=1}^{n}p_{k}S_k(f),
\]
where $P_n:=\sum_{k=1}^{n}p_k\ (n\in\mathbb{P}).$ It is always assumed that $p_1>0$ and 
\[
\lim_{n\to\infty}P_n=\infty,
\]
which is the condition for regularity. 

In particular case N\"orlund and $T$ (weighted) means are Fej\'er ones (for all $k$ set $p_k:=1$ and $q_k:=1$).

Let $T:=\left( t_{i,j}\right)_{i,j=0}^{\infty}$ be a doubly infinite matrix of numbers. It is always supposed that matrix $T$ is triangular.
Let us define the $n$th decreasing diagonal  matrix transform mean determined by the matrix $T$
\[
\sigma_{n}^{T}(f):=\sum_{k=1}^{n}t_{k,n}S_{k}(f),
\]
where $\{ t_{k,n}: 1\leq k\leq n, \ k\in\mathbb{P}\}$ be a finite sequence of non-negative numbers for each $n\in\mathbb{P}$. 

With assumption $\sum_{k=1}^{n}t_{k,n}=1$, N\"orlund means are special matrix transform means, choosing 
\begin{equation}\label{conn}
	t_{k,n}:=\frac{q_{n-k}}{Q_{n}}.
\end{equation}
The $n$th matrix transform kernel is defined by
\[
K_{n}^{T}(x):=\sum_{k=1}^{n}t_{k,n}D_{k}(x).
\]
It is easily seen that
\[
\sigma_{n}^{T}(f;x)=\int_{G}f(u+x)K_{n}^{T}(u)d\mu(u),
\]
where $x,u\in G$. This equality (and its analogous versions for special means) shows us the necessity of observing kernel functions.

We introduce the notation
$\Delta t_{k,n}:=t_{k,n}-t_{k+1,n},$ where $k\in\{1,\ldots,n\},\ n\in\mathbb{P}$ and $t_{n+1,n}:=0$.	

\section{Historical overview}

Matrix transform means are common generalizations of several well-known summation methods. It follows by simple consideration that the N\"orlund means, the $T$ (weighted) means, the  Fej\'er (or the $(C,1)$) and the $(C,\alpha)$ means are special cases of the matrix transform summation method introduced above.

Our paper is motivated by works of M\'oricz and Siddiqi \cite{MS}, Areshidze and Tephnadze \cite{AT} on the Walsh-N\"orlund, and Anakidze, Areshidze, Persson and Tephnadze \cite{AAPT} on the $T$ (weighted) summation method. (For Vilenkin versions see \cite{AAB} and \cite{BN-Publ}.) As special cases, M\'oricz and Siddiqi obtained the earlier results given by Yano \cite{Y2}, Jastrebova \cite{J} and Skvortsov \cite{Skc} on the rate of the approximation by Ces\`aro means. 

The approximation properties of the Walsh-Ces\`aro means of negative order were studied by Goginava \cite{gog1}, the Vilenkin case was investigated by Shavardenidze \cite{Sh} and Tepnadze \cite{TT}. 

Fridli, Manchanda and Siddiqi  generalized the result of M\'oricz and Siddiqi for homogeneous Banach spaces and dyadic Hardy spaces \cite{FMS}. Recently, L. Baramidze, D. Baramidze, Memi\'c, Persson, Tephnadze and Wall presented  some results with respect to this topic \cite{T1,BPST,T3,T2,Mem}. See \cite{TE, W1}, as well. For two-dimensional results see \cite{BN3,BNT,N2,N1}.

M\'oricz and Siddiqi in \cite{MS} proved for N\"orlund means the following.

\begin{theorem}[M\'oricz and Siddiqi \cite{MS}, Theorem 1]\label{MS}
	Let $f\in L_{p}(G),\ 1\leq p\leq\infty$ and let $\{q_{k}: k\in\mathbb{N}\}$ be a sequence of nonnegative numbers such that
	\begin{equation}\label{MS1}
		\frac{n^{\gamma-1}}{Q^{\gamma}_{n}}\sum_{k=0}^{n-1}q_{k}^{\gamma}=O(1)\textrm{ for some } 1<\gamma\leq 2.
	\end{equation}
	If ${q_{k}}$ is non-decreasing, then
	\begin{equation}\label{MS2}
	 \left\|t_{n}(f)-f\right\|_{p}\leq \frac{5}{2Q_{n}}\sum_{j=0}^{|n|-1}2^{j}q_{n-2^{j}}\omega_{p}\left(f,\frac{1}{2^{j}}\right)+c\omega_{p}\left(f,\frac{1}{2^{|n|}}\right),
	\end{equation}
	while if ${q_{k}}$ is non-increasing, then
	\begin{equation}\label{MS3}
		\left\|t_{n}(f)-f\right\|_{p}\leq \frac{5}{2Q_{n}}\sum_{j=0}^{|n|-1}(Q_{n-2^{j}-1}-Q_{n-2^{j+1}-1})\omega_{p}\left(f,\frac{1}{2^{j}}\right)+c\omega_{p}\left(f,\frac{1}{2^{|n|}}\right).
	\end{equation}
\end{theorem}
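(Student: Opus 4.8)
The plan is to carry out the standard reduction to estimating the N\"orlund kernel on dyadic blocks. Denote by $\mathcal K_n(u):=\frac1{Q_n}\sum_{k=1}^nq_{n-k}D_k(u)$ the kernel of $t_n$. Since $\int_GD_k\,d\mu=1$ for every $k\ge1$ and $\sum_{k=1}^nq_{n-k}=Q_n$, we have $\int_G\mathcal K_n\,d\mu=1$, and hence
\[
t_n(f;x)-f(x)=\int_G\bigl(f(x+u)-f(x)\bigr)\mathcal K_n(u)\,d\mu(u).
\]
By the generalized Minkowski inequality and $\|f(\cdot+u)-f\|_p\le\omega_p(f,|u|)$,
\[
\bigl\|t_n(f)-f\bigr\|_p\le\int_G\omega_p\bigl(f,|u|\bigr)\,\bigl|\mathcal K_n(u)\bigr|\,d\mu(u).
\]

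First I would split $G$ along the dyadic filtration, $G\setminus\{0\}=\bigcup_{j=0}^\infty(I_j\setminus I_{j+1})$, a disjoint union up to a null set. For $u\in I_j\setminus I_{j+1}$ we have $|u|\le 2^{-j}$, so $\omega_p(f,|u|)\le\omega_p(f,2^{-j})$ by monotonicity of the modulus; moreover $\bigcup_{j\ge|n|}(I_j\setminus I_{j+1})=I_{|n|}$, where $\omega_p(f,2^{-j})\le\omega_p(f,2^{-|n|})$. Consequently
\[
\bigl\|t_n(f)-f\bigr\|_p\le\sum_{j=0}^{|n|-1}\omega_p\!\Bigl(f,\tfrac1{2^{j}}\Bigr)\!\!\int_{I_j\setminus I_{j+1}}\!\!\!\bigl|\mathcal K_n\bigr|\,d\mu\;+\;\omega_p\!\Bigl(f,\tfrac1{2^{|n|}}\Bigr)\!\!\int_{I_{|n|}}\!\!\bigl|\mathcal K_n\bigr|\,d\mu .
\]
On $I_{|n|}$ the trivial bound $|D_k|\le k<2^{|n|+1}$ (valid for $k\le n$) gives $|\mathcal K_n|<2^{|n|+1}$, so $\int_{I_{|n|}}|\mathcal K_n|\,d\mu\le2$, and the last term is $\le2\,\omega_p(f,2^{-|n|})$; this is the term $c\,\omega_p(f,2^{-|n|})$ in \eqref{MS2}--\eqref{MS3}. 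It remains to bound $\int_{I_j\setminus I_{j+1}}|\mathcal K_n|\,d\mu$ for $0\le j\le|n|-1$.

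This block estimate is the heart of the matter, and I would obtain it by an Abel transformation. From
\[
Q_n\mathcal K_n=\sum_{k=1}^{n-1}(q_{n-k}-q_{n-k-1})\,kK_k+q_0\,nK_n,
\]
together with the fact that $q_{n-k}-q_{n-k-1}$ keeps a constant sign (nonnegative for non-decreasing $\{q_k\}$, nonpositive for non-increasing $\{q_k\}$), the problem is reduced to the Walsh--Fej\'er integrals $\int_{I_j\setminus I_{j+1}}|kK_k|\,d\mu$. I would control these via the well-known fine structure of Walsh--Fej\'er kernels: for $m=2^{m_1}+\dots+2^{m_r}$ with $m_1>\dots>m_r$, $mK_m$ splits into the pieces $2^{m_i}K_{2^{m_i}}$ and $(m\bmod 2^{m_i})\,D_{2^{m_i}}$ carried by Walsh characters, and on a dyadic block one has the clean uniform estimates $\int_{I_j\setminus I_{j+1}}\bigl|2^{m}K_{2^{m}}\bigr|\,d\mu\le 2^{j}$ (because $K_{2^m}$ is a polynomial in $r_0,\dots,r_{m-1}$) and $\int_{I_j\setminus I_{j+1}}\bigl|D_{2^{m}}\bigr|\,d\mu\le 2^{j}$ (because $D_{2^m}=2^m\mathbf 1_{I_m}$). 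Resumming over $k$ — the constant sign of the $q$-increments is exactly what makes this possible without sacrificing the cancellation, and hypothesis \eqref{MS1} is what keeps the resulting sums of $q$'s under control — one is led, for non-decreasing $\{q_k\}$, to $\int_{I_j\setminus I_{j+1}}|\mathcal K_n|\,d\mu\le\frac5{2Q_n}\,2^{j}q_{n-2^{j}}$, and for non-increasing $\{q_k\}$ to the same bound with $2^jq_{n-2^j}$ replaced by $Q_{n-2^{j}-1}-Q_{n-2^{j+1}-1}$.

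Substituting the block estimates into the displayed inequality, together with $\int_{I_{|n|}}|\mathcal K_n|\,d\mu\le2$, gives \eqref{MS2} and \eqref{MS3}; alternatively one may first estimate the partial sums $\int_{I_0\setminus I_{j+1}}|\mathcal K_n|\,d\mu$ and then recombine them through an Abel summation that uses the monotonicity of $j\mapsto\omega_p(f,2^{-j})$. The genuinely delicate step, I expect, is the block estimate for $\mathcal K_n$: one must pin down the sharp numerical constant and, above all, organize the Walsh--Fej\'er decomposition so that the oscillating Dirichlet-kernel pieces $D_{2^{m_i}}$ do not cost a logarithmic factor — and it is precisely there that the constant-sign property of $\{q_k\}$ and the condition \eqref{MS1} carry the weight.
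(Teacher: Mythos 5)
First, note that this theorem is quoted from M\'oricz and Siddiqi and is not proved in the present paper, so your proposal can only be measured against the original argument. Your outer reduction is correct and is indeed how that argument (and every later variant) begins: the representation $t_n(f;x)-f(x)=\int_G(f(x+u)-f(x))\mathcal{K}_n(u)\,d\mu(u)$ with $\mathcal{K}_n:=\frac1{Q_n}\sum_{k=1}^nq_{n-k}D_k$, the generalized Minkowski inequality, the splitting of $G$ into the annuli $I_j\setminus I_{j+1}$, and the crude bound $|\mathcal{K}_n|\le n<2^{|n|+1}$ on $I_{|n|}$, which correctly produces the term $2\,\omega_p(f,2^{-|n|})$.

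The genuine gap is that the block estimate
\begin{equation*}
\int_{I_j\setminus I_{j+1}}|\mathcal{K}_n|\,d\mu\;\le\;\frac{5}{2Q_n}\,2^{j}q_{n-2^{j}}\qquad(0\le j\le|n|-1)
\end{equation*}
(and its non-increasing counterpart) is the entire content of the theorem, and you do not prove it --- you only name it as the target. Worse, the route you sketch cannot reach it. After the Abel transformation $Q_n\mathcal{K}_n=\sum_{k=1}^{n-1}(q_{n-k}-q_{n-k-1})kK_k+q_0nK_n$, estimating each $kK_k$ on $I_j\setminus I_{j+1}$ by the triangle inequality applied to Fine's decomposition (equivalently, via Lemma \ref{nKn}) gives, by Lemma \ref{K2n}, a contribution of $2^{j-1}$ from \emph{each} scale $l$ with $j<l\le|k|$, hence $\int_{I_j\setminus I_{j+1}}k|K_k|\,d\mu\le C(|k|-j+1)2^{j}$; once absolute values have been taken there is no cancellation left among these scales. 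Resumming over $k$ then yields a bound of order $\frac{1}{Q_n}(|n|-j+1)2^{j}q_{n-1}$ for the $j$th block: a logarithmic factor $(|n|-j+1)$ that is absent from \eqref{MS2}, and the coefficient $q_{n-1}$ rather than $q_{n-2^{j}}$. You acknowledge both dangers but supply no mechanism to avoid them; the actual proof estimates $\int_{I_j\setminus I_{j+1}}|\mathcal{K}_n|$ directly, exploiting the monotonicity of $\{q_k\}$ and the sign structure of the Dirichlet and Fej\'er kernels on a fixed annulus \emph{before} any absolute values are taken. Finally, hypothesis \eqref{MS1} never enters your argument at any concrete point (``keeps the resulting sums of $q$'s under control'' is not a step), although it is part of the theorem's hypotheses and is used in the source to control specific terms. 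As written, the proposal establishes only a weaker inequality with $(|n|-j)$-weighted coefficients.
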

\begin{remark}
	Condition \eqref{MS1} implies \eqref{Nor} and \eqref{reg}.
\end{remark}
Blahota and K. Nagy \cite{BN1} observed similar inequalities for matrix transform means. With our notations:
\begin{theorem}[Blahota and K. Nagy \cite{BN1}, Theorem 1]\label{BNthm}
	Let $f\in L_{p}(G), 1 \leq p \leq \infty$. For every $n\in\mathbb{N},\ \{t_{k,n}: 1\leq k\leq n\}$ be a finite sequence of non-negative numbers such that 
	\[
	\sum_{k=1}^n t_{k,n}=1
	\]
	is satisfied. 
	
	a) If the finite sequence  $ \{ t_{k,n}: 1\leq k \leq n\}$ is non-decreasing for a fixed $n$ and the condition 
	\begin{equation}\label{cond-1}
		t_{n,n}=O\left(\frac{1}{n}\right)
	\end{equation}
	is satisfied, then
	\begin{eqnarray*}\label{elso}
		\left\Vert\sigma^{T}_{n}(f)-f\right\Vert_{p}\leq 5\sum_{j=0}^{|n|-1}2^{j}t_{2^{j+1}-1,n}
		\omega_{p}\left(f,\frac{1}{2^{j}}\right)+c
		\omega_{p}\left(f,\frac{1}{2^{|n|}}\right)
	\end{eqnarray*}
	holds.
	
	b) If the finite sequence $ \{ t_{k,n}: 1\leq k \leq n \}$  is  non-increasing for a  fixed $n$, then 
	\begin{equation*}\label{masodik}
		\left\Vert\sigma^{T}_{n}(f)-f\right\Vert_{p}\leq 5\sum_{j=0}^{|n|-1}2^{j}t_{2^{j},n}
		\omega_{p}\left(f,\frac{1}{2^{j}}\right)+c
		\omega_{p}\left(f,\frac{1}{2^{|n|}}\right)
	\end{equation*}
	holds.
\end{theorem}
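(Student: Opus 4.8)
The plan is to work with the convolution identity $\sigma_n^T(f;x)-f(x)=\int_G\bigl(f(x+u)-f(x)\bigr)K_n^T(u)\,d\mu(u)$, which holds because $\sum_{k=1}^n t_{k,n}=1$. The generalized Minkowski inequality and the definition of $\omega_p$ give
\[
\left\|\sigma_n^T(f)-f\right\|_p\le\int_G\omega_p\bigl(f,|u|\bigr)\,\bigl|K_n^T(u)\bigr|\,d\mu(u),
\]
so, splitting $G=I_{|n|}\cup\bigcup_{j=0}^{|n|-1}\bigl(I_j\setminus I_{j+1}\bigr)$, everything reduces to estimating $\int_{I_j\setminus I_{j+1}}|K_n^T|\,d\mu$ for $0\le j\le|n|-1$ together with the easy central contribution, which is at most $\omega_p(f,2^{-|n|})\,\|K_n^T\|_1$: here $\|K_n^T\|_1=O(1)$ follows from the summation-by-parts identity $K_n^T=\sum_{k=1}^n\Delta t_{k,n}\,kK_k$ (using $t_{n+1,n}=0$), the classical uniform bound $\|K_k\|_1\le c$ for Fej\'er kernels, and the hypotheses --- $\sum_{k=1}^n|\Delta t_{k,n}|\,k$ equals $\sum_{k=1}^n t_{k,n}=1$ in case b) (where $\Delta t_{k,n}\ge 0$) and equals $2nt_{n,n}-1=O(1)$ in case a) by \eqref{cond-1}.

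Case b) admits a short argument that sidesteps the kernel estimate. From $S_k(f)=k\sigma_k(f)-(k-1)\sigma_{k-1}(f)$ and Abel summation one gets $\sigma_n^T(f)=\sum_{k=1}^n\Delta t_{k,n}\,k\,\sigma_k(f)$; since $\sum_{k=1}^n\Delta t_{k,n}\,k=1$ this reads $\sigma_n^T(f)-f=\sum_{k=1}^n\Delta t_{k,n}\,k\bigl(\sigma_k(f)-f\bigr)$, and, as $\Delta t_{k,n}\ge 0$, the triangle inequality applies. Inserting the Fej\'er special case of Theorem \ref{MS} (take $q_k\equiv 1$: then \eqref{MS1} holds trivially and $t_n=\sigma_n$), namely $\|\sigma_k(f)-f\|_p\le ck^{-1}\sum_{i=0}^{|k|-1}2^i\omega_p(f,2^{-i})+c\,\omega_p(f,2^{-|k|})$, then interchanging the sums, telescoping $\sum_{k\ge 2^{i+1}}\Delta t_{k,n}=t_{2^{i+1},n}$ and $\sum_{2^i\le k<2^{i+1}}\Delta t_{k,n}\,k\le 2^{i+1}t_{2^i,n}$, and using $t_{2^{i+1},n}\le t_{2^i,n}$ with $2^{|n|}t_{2^{|n|},n}\le\sum_{k=1}^{2^{|n|}}t_{k,n}\le 1$ for the boundary term $i=|n|$, give the asserted inequality in b).

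Case a) is the real obstacle: now $\Delta t_{k,n}\le 0$ for $k<n$, the convex-combination trick fails, and one must estimate $\int_{I_j\setminus I_{j+1}}|K_n^T|\,d\mu$ directly. For $u\in I_j\setminus I_{j+1}$, iterating $D_{2^i}(u)=2^i\chi_{I_i}(u)$ and $D_k=D_{2^{|k|}}+w_{2^{|k|}}D_{k-2^{|k|}}$ yields the factorisation $D_k(u)=w_{\lfloor k/2^{j+1}\rfloor}(v)\,d_j\bigl(k\bmod 2^{j+1}\bigr)$, with $v:=(u_{j+1},u_{j+2},\dots)$ and $d_j(s):=\min(s,2^{j+1}-s)\le 2^j$; hence $K_n^T(u)=\sum_{q\ge 0}a_q w_q(v)$, $a_q:=\sum_{s=0}^{2^{j+1}-1}t_{2^{j+1}q+s,\,n}\,d_j(s)$, and $\int_{I_j\setminus I_{j+1}}|K_n^T|\,d\mu=2^{-j-1}\|\sum_{q\ge 0}a_q w_q\|_1$. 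The block $q=0$ produces the main term: by monotonicity $a_0\le 2^j\sum_{s<2^{j+1}}t_{s,n}\le 2^{2j+1}t_{2^{j+1}-1,n}$, a contribution $\lesssim 2^j t_{2^{j+1}-1,n}$; for $q\ge 1$ one Abel-sums in $q$ and uses $\|D_q\|_1\le c\log q$ with $a_q\le 2^{2j+1}t_{n,n}=O(2^{2j}/n)$ (from \eqref{cond-1}). The delicate point, and the part I expect to cost the most work, is that the triangle inequality inside the $q$-sum kills the cancellation that keeps $\|K_n^T\|_1$ bounded, so the tail must be handled carefully enough that the unavoidable logarithms $\log q\lesssim|n|-j$ do not pile up after multiplication by $\omega_p(f,2^{-j})$ and summation over $j$, and is ultimately absorbed into $c\,\omega_p(f,2^{-|n|})$ and the main terms; reassembling the ring estimates then finishes case a). (For a fixed $n$, $\sigma_n^T(f)$ is also the $n$-th N\"orlund mean with the non-increasing weights $q_m:=t_{n-m,n}$, and $\sum_k t_{k,n}^2\le t_{n,n}$ makes \eqref{MS1} hold with $\gamma=2$ by \eqref{cond-1}; this suggests reading a) off from \eqref{MS3}, the caveat being that these weights vary with $n$.)
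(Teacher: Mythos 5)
First, a point of orientation: this paper never proves Theorem \ref{BNthm} --- it is quoted from \cite{BN1}, whose proof (like M\'oricz--Siddiqi's for Theorem \ref{MS}) rests on an Abel decomposition of $K_n^T$ into the non-negative, localized building blocks $D_{2^l}$ and $K_{2^l}$ \emph{before} any absolute values are taken. Your part b) is correct and is essentially the argument this paper itself uses for Theorem \ref{BD4_3}: when $\{t_{k,n}\}$ is non-increasing, $\sigma_n^T(f)-f=\sum_{k=1}^n\Delta t_{k,n}\,k\,(\sigma_k(f)-f)$ is a convex combination, and inserting the Fej\'er estimate (Lemma \ref{3Ts}) and interchanging sums gives the claim; you land on the constant $6$ rather than $5$, and you should add a word about $p=\infty$ (Lemma \ref{3Ts} is stated for $p<\infty$ but the same proof works for continuous $f$), but these are cosmetic.

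Part a) is where the proposal breaks down, and the failure is structural, not merely the unfinished tail computation you flag. Your reduction commits you to proving $\int_{I_j\setminus I_{j+1}}|K_n^T|\,d\mu\le c\,2^jt_{2^{j+1}-1,n}$ for every $j\le|n|-1$: any $j$-dependent leftover is multiplied by $\omega_p(f,2^{-j})$, and since the dyadic modulus has no doubling property (e.g.\ $\omega_p(r_j,2^{-j})=2$ while $\omega_p(r_j,2^{-j-1})=0$), such leftovers cannot be pushed into the term $c\,\omega_p(f,2^{-|n|})$. But that annulus inequality is false under the hypotheses of a): take $t_{k,n}=1/m$ for the last $m\approx n/C$ indices and $0$ otherwise (non-decreasing, summing to $1$, with $t_{n,n}=O(1/n)$); then $t_{2^{j+1}-1,n}=0$ for all $j\le|n|-2$, yet $K_n^T=\frac{1}{m}\sum_{k>n-m}D_k=\frac{1}{m}\sum_{l<n}\min(m,n-l)w_l$ has Walsh coefficients that are not constant on $\{2^{|n|-1},\dots,2^{|n|}-1\}$, so it is not supported in $I_{|n|-1}$ and some annulus integral with $j\le|n|-2$ is strictly positive. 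Consistently, the tail your Abel-summation-plus-$\|D_q\|_1\lesssim\log q$ step leaves behind, $\sum_j 2^j(|n|-j)t_{n,n}\,\omega_p(f,2^{-j})$, is not dominated by the theorem's right-hand side (with $f=w_{32}$ and the weights above, that right-hand side is exactly $0$ while your bound is not). So no refinement of the estimate of $\|\sum_{q\ge1}a_qw_q\|_1$ can rescue the annulus-by-annulus strategy; one must decompose the kernel first (as in Corollary \ref{felb} and the proof of Theorem \ref{BD4_2}) and apply Minkowski to each signed piece, so that the spread-out mass of each $K_{2^l}$ is paired with $\omega_p(f,2^{-l})$. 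Your closing parenthesis is actually the most viable elementary route: for fixed $n$, a) \emph{is} the non-increasing case of Theorem \ref{MS} with $q_m:=t_{n-m,n}$, condition \eqref{MS1} holding uniformly with $\gamma=2$ because $n\sum_k t_{k,n}^2\le nt_{n,n}=O(1)$, and $Q_{n-2^j-1}-Q_{n-2^{j+1}-1}=\sum_{l=2^j+2}^{2^{j+1}+1}t_{l,n}\le 2^jt_{2^{j+1}+1,n}$; developed in full (including the index shift from $2^{j+1}+1$ to $2^{j+1}-1$), that would be a legitimate proof, but as written it is a one-sentence aside rather than the argument.
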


Iofina and Volosivets published similar results on Vilenkin systems with similar assumptions using different methods (independently form technics of M\'oricz, Rhoades, Siddiqi, Fridli and others) with respect to matrix transform means in \cite{IS}.

Areshidze and Tephnadze \cite{AT} obtained Theorem \ref{ATT}. They proved (in the non-decreasing case) Theorem \ref{MS}, with concrete coefficient and without condition \eqref{MS1}.

\begin{theorem}[Areshidze and Tephnadze \cite{AT}, Theorem 1]\label{ATT}
	Let $f\in L_{p}(G),\ 1\leq p<\infty$ and let $t_{n}$ be a regular N\"orlund mean generated by non-decreasing sequence $\{q_{k}: k\in\mathbb{N}\}$. Then
	\begin{equation}\label{AT}
		\left\|t_{n}(f)-f\right\|_{p}\leq 18\sum_{k=0}^{|n|-1}2^{k}\frac{q_{n-2^{k}}}{Q_{n}}\omega_{p}\left(f,\frac{1}{2^{k}}\right)+12\omega_{p}\left(f,\frac{1}{2^{|n|}}\right).
	\end{equation}
\end{theorem}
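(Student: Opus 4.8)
The plan is to exploit the fact that, for a non-decreasing generating sequence, the N\"orlund mean $t_n$ is an average of Fej\'er means, and then to feed in the (known) rate of approximation of Fej\'er means. Applying Abel summation to the partial sums, with the convention that empty sums vanish and using $\sum_{i=1}^{k}S_i(f)=k\sigma_k(f)$, one gets
\[
Q_n\,t_n(f)=\sum_{k=1}^{n}q_{n-k}S_k(f)=q_0\,n\,\sigma_n(f)+\sum_{k=1}^{n-1}(q_{n-k}-q_{n-k-1})\,k\,\sigma_k(f).
\]
Because $\{q_k\}$ is non-decreasing each coefficient $q_{n-k}-q_{n-k-1}$ is $\ge0$, and a telescoping computation gives $q_0n+\sum_{k=1}^{n-1}(q_{n-k}-q_{n-k-1})k=Q_n$. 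Hence $t_n(f)=\sum_{m=1}^{n}\lambda_m\sigma_m(f)$ with $\lambda_m\ge0$, $\sum_{m=1}^{n}\lambda_m=1$, where $\lambda_m=(q_{n-m}-q_{n-m-1})m/Q_n$ for $1\le m\le n-1$ and $\lambda_n=q_0n/Q_n$, so by the triangle inequality
\[
\left\|t_n(f)-f\right\|_p\le\sum_{m=1}^{n}\lambda_m\left\|\sigma_m(f)-f\right\|_p .
\]

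Next I would insert the Fej\'er estimate $\left\|\sigma_m(f)-f\right\|_p\le\frac{5}{2m}\sum_{j=0}^{|m|-1}2^j\omega_p(f,2^{-j})+c\,\omega_p(f,2^{-|m|})$, which is the case $q_k\equiv1$ of Theorem \ref{MS} (condition \eqref{MS1} being trivially satisfied), valid for $1\le p\le\infty$ with an absolute constant $c$ which can be taken $\le4$. Substituting and interchanging the order of summation, the weights $q_{n-2^{k}}/Q_n$ appear through the telescoping identity $\sum_{1\le m\le n-1,\ |m|>j}(q_{n-m}-q_{n-m-1})=q_{n-2^{j+1}}-q_0$ (for $0\le j\le|n|-1$), after adding the $\lambda_n$-contribution $q_0$ and using $q_{n-2^{j+1}}\le q_{n-2^{j}}$ (monotonicity of $\{q_k\}$ again); this yields $\frac{1}{Q_n}\sum_{k=0}^{|n|-1}2^{k}q_{n-2^{k}}\omega_p(f,2^{-k})$ from the first Fej\'er term. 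For the second Fej\'er term one groups the indices $m$ according to $|m|$, bounds $m<2^{|m|+1}$ and telescopes once more ($\sum_{|m|=i}(q_{n-m}-q_{n-m-1})m\le2^{i+1}q_{n-2^{i}}$), absorbing the borderline indices $m=n$ and $|m|=|n|$ into the term $\omega_p(f,2^{-|n|})$ by means of $nq_0/Q_n\le1$ and $2^{|n|}q_{n-2^{|n|}}/Q_n\le1$ (both immediate from $Q_n\ge nq_0$ and $Q_n\ge\sum_{k=n-2^{|n|}}^{n-1}q_k\ge2^{|n|}q_{n-2^{|n|}}$). Collecting the constants one arrives at
\[
\left\|t_n(f)-f\right\|_p\le\Bigl(\tfrac52+2c\Bigr)\frac{1}{Q_n}\sum_{k=0}^{|n|-1}2^{k}q_{n-2^{k}}\,\omega_p\!\left(f,\frac{1}{2^{k}}\right)+3c\,\omega_p\!\left(f,\frac{1}{2^{|n|}}\right),
\]
which for $c\le4$ is dominated by the right-hand side of \eqref{AT}.

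The reduction and the rearrangement above are routine; I expect the real difficulty to be concentrated in the Fej\'er step, namely obtaining that approximation estimate with an explicit admissible constant. This rests on the block estimates $\int_{I_l\setminus I_{l+1}}|K_m|\,d\mu\le c\,2^l/m$ for the Walsh--Fej\'er kernels $K_m$ — the plain bound $\|K_m\|_1\le2$ is not enough for the frequency-localized weights — so tracking constants there is the delicate point. Finally I would remark that the statement is in any case an immediate consequence of Theorem \ref{BNthm}(b): for each fixed $n$ the coefficients $t_{k,n}=q_{n-k}/Q_n$ form a non-negative sequence of sum $1$ which is non-increasing in $k$ precisely because $\{q_k\}$ is non-decreasing, and that theorem then gives \eqref{AT} with even smaller constants.
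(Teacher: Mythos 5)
The paper does not prove this theorem at all: it is quoted verbatim from Areshidze and Tephnadze \cite{AT} as background, so there is no in-paper argument to compare yours against. Judged on its own, your reduction is sound and is in fact the same mechanism the paper uses later for Theorem \ref{BD4_3}: Abel summation turns the N\"orlund mean into a convex combination of Fej\'er means, $t_n(f)=\sum_{m=1}^{n}\lambda_m\sigma_m(f)$ with $\lambda_m=(q_{n-m}-q_{n-m-1})m/Q_n$ for $m<n$, $\lambda_n=q_0n/Q_n$, and the non-decreasing hypothesis is exactly what makes the weights non-negative. Your telescoping identities and the absorption of the borderline blocks are all correct.

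The one genuine soft spot is the Fej\'er input. You invoke the $q_k\equiv1$ case of Theorem \ref{MS} and assert that its unspecified constant $c$ "can be taken $\le 4$"; nothing in your argument (or in the quoted form of Theorem \ref{MS}) justifies that, and without it you do not reach the explicit constants $18$ and $12$. You correctly flag this as the residual difficulty, but it is easier to close than you suggest: the paper's own Lemma \ref{3Ts} gives the fully explicit bound $\left\|\sigma_m(f)-f\right\|_p\le 3\sum_{s=0}^{|m|}2^{s-|m|}\omega_p(f,2^{-s})$, which, after splitting off the $s=|m|$ term and using $2^{-|m|}\le 2/m$, is precisely your Fej\'er estimate with $5/2$ replaced by $6$ and $c=3$. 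Feeding that into your computation yields $12\sum_{k=0}^{|n|-1}2^{k}\frac{q_{n-2^{k}}}{Q_n}\omega_p\left(f,\frac{1}{2^{k}}\right)+3\,\omega_p\left(f,\frac{1}{2^{|n|}}\right)$, comfortably inside \eqref{AT}, and it keeps you within the range $1\le p<\infty$ of the statement. Your closing remark that the theorem also follows from Theorem \ref{BNthm}(b) via $t_{k,n}=q_{n-k}/Q_n$ is correct in substance (this is exactly the paper's point when it says Theorem \ref{BD4_1} improves Theorem \ref{ATT}), except that the constant $c$ there is also unspecified, so "with even smaller constants" is not literally warranted for the second term.
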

For similar results on $T$ (weighted) means see paper of Anakidze, Areshidze, Persson and Tephnadze \cite{AAPT}.

Our first result in this paper, Theorem \ref{BD4_1} (using different method for the proof) improves and generalizes the non-decreasing cases of Theorem \ref{MS} and Theorem \ref{ATT} and improves the non-increasing case of Theorem \ref{BNthm} . (The non-decreasing sequence $\{q_{k}\}$ corresponds to the non-increasing sequence $\{t_{k,n}\}$ with connection \eqref{conn}.)

\section{Auxiliary results}	

\begin{lemma}[Paley's lemma \cite{SWSP}, p. 7.]\label{Pl}
	Let	$n\in \mathbb{N}$. Then
	\begin{align*}
		D_{2^{n}}(x)=
		\begin{cases}
			2^{n}, & \textrm{if } x\in I_{n},\\
			0, & \textrm{if } x\notin I_{n}.
		\end{cases}
	\end{align*}
\end{lemma}
The next lemma is a simple corollary of Lemma \ref{Pl} and the definition of Dirichlet kernel functions. It can be found in several articles in the literature. For example see \cite{AG}.
\begin{lemma}\label{Dbont}
	Let $k,n\in\mathbb{N}$ and $k<2^{n}$. Then we have 
	\[
	D_{2^{n}-k}=D_{2^{n}}-w_{2^{n}-1}D_{k}.
	\]
\end{lemma}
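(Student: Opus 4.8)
The plan is to expand $D_{2^n-k}$ as a Walsh sum, reflect the summation range onto $\{0,1,\dots,k-1\}$, and read off the claimed identity. First I would write $D_{2^n-k}=\sum_{j=0}^{2^n-k-1}w_j$ and substitute $j=2^n-1-m$; as $j$ runs through $\{0,1,\dots,2^n-k-1\}$ the new index $m$ runs through $\{k,k+1,\dots,2^n-1\}$, so that $D_{2^n-k}=\sum_{m=k}^{2^n-1}w_{2^n-1-m}$.

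The key step is the reflection identity
\[
w_{2^n-1-m}=w_{2^n-1}\,w_m\qquad(0\le m<2^n).
\]
To prove it, note that $2^n-1=\sum_{i=0}^{n-1}2^i$ has all of its first $n$ binary digits equal to $1$, while $m<2^n$ has binary digits supported on $\{0,\dots,n-1\}$; hence $2^n-1-m$ is obtained from $m$ simply by complementing those $n$ digits, i.e. $(2^n-1-m)_i=1-m_i$ for $0\le i\le n-1$ and $(2^n-1-m)_i=0$ for $i\ge n$. Inserting this into the product definition of the Walsh functions and using $r_i^2\equiv 1$ gives $w_{2^n-1-m}=\prod_{i=0}^{n-1}r_i^{\,1-m_i}=\left(\prod_{i=0}^{n-1}r_i\right)\left(\prod_{i=0}^{n-1}r_i^{\,m_i}\right)=w_{2^n-1}w_m$. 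I expect this binary-digit bookkeeping to be the only genuine obstacle; the rest is formal manipulation.

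With the reflection identity available, $D_{2^n-k}=w_{2^n-1}\sum_{m=k}^{2^n-1}w_m=w_{2^n-1}\left(D_{2^n}-D_k\right)$. It then remains to verify that $w_{2^n-1}D_{2^n}=D_{2^n}$, which is an immediate consequence of Paley's lemma (Lemma \ref{Pl}): off $I_n$ the kernel $D_{2^n}$ vanishes, whereas on $I_n$ every coordinate $x_i$ with $i<n$ equals $0$, so $w_{2^n-1}(x)=(-1)^{x_0+\dots+x_{n-1}}=1$ there. Combining these facts yields $D_{2^n-k}=D_{2^n}-w_{2^n-1}D_k$, as claimed; the degenerate case $k=0$ is also covered, since $D_0=0$ by definition.
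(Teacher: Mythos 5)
Your proof is correct, and it follows exactly the route the paper itself indicates (the paper gives no written proof, only the remark that the lemma is ``a simple corollary of Lemma~\ref{Pl} and the definition of Dirichlet kernel functions''): the index reflection $j\mapsto 2^n-1-j$, the digit-complement identity $w_{2^n-1-m}=w_{2^n-1}w_m$ for $m<2^n$, and the observation from Paley's lemma that $w_{2^n-1}D_{2^n}=D_{2^n}$ are precisely the intended ingredients. No gaps.
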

\begin{lemma}[G\'at \cite{G1}, Corollary 6]\label{K2n}
Let	$n,t\in \mathbb{N}$ and $t<n$. Then
\begin{align*}
	K_{2^{n}}(x)=
	\begin{cases}
		2^{t-1}, & \textrm{if } x\in I_{t}\backslash I_{t+1},\ x-e_{t}\in I_{n},\\
		\frac{2^{n}+1}{2}, & \textrm{if } x\in I_{n},\\	
		0, & \textrm{otherwise.}
	\end{cases}
\end{align*}
\end{lemma}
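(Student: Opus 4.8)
The plan is to compute $K_{2^{n}}$ in closed form by reversing the order of summation in $2^{n}K_{2^{n}}=\sum_{k=1}^{2^{n}}D_{k}$ and then evaluating a single Walsh polynomial pointwise. Using $D_{k}=\sum_{j=0}^{k-1}w_{j}$ (with $D_{0}=0$) and counting the number of $k\in\{1,\dots,2^{n}\}$ with $k>j$, one obtains
\[
2^{n}K_{2^{n}}=\sum_{j=0}^{2^{n}-1}(2^{n}-j)\,w_{j}=2^{n}D_{2^{n}}-\Sigma_{n},\qquad \Sigma_{n}(x):=\sum_{j=0}^{2^{n}-1}j\,w_{j}(x),
\]
so that, by Paley's Lemma (Lemma~\ref{Pl}), $K_{2^{n}}=2^{n}\mathbf{1}_{I_{n}}-2^{-n}\Sigma_{n}$. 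Writing $j=\sum_{i=0}^{n-1}\varepsilon_{i}2^{i}$ over $\varepsilon\in\{0,1\}^{n}$ and $w_{j}=\prod_{i=0}^{n-1}r_{i}^{\varepsilon_{i}}$, the sum $\Sigma_{n}$ factorizes termwise:
\[
\Sigma_{n}(x)=\sum_{i=0}^{n-1}2^{i}\,r_{i}(x)\prod_{\substack{0\le l\le n-1\\ l\ne i}}\bigl(1+r_{l}(x)\bigr).
\]

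Next I would evaluate $\Sigma_{n}(x)$ pointwise, using that $1+r_{l}(x)$ equals $2$ when $x_{l}=0$ and $0$ when $x_{l}=1$; hence the $i$-th term survives only if $x_{l}=0$ for every $l\in\{0,\dots,n-1\}\setminus\{i\}$. If $x\in I_{n}$, every term survives, $\Sigma_{n}(x)=2^{n-1}(2^{n}-1)$, and therefore $K_{2^{n}}(x)=2^{n}-2^{n-1}+\tfrac12=(2^{n}+1)/2$. If $x\notin I_{n}$, set $t:=\min\{i:x_{i}=1\}$ (so $t<n$); the only term that can survive is $i=t$, and it does so precisely when $x_{t+1}=\dots=x_{n-1}=0$, in which case its value equals $2^{t}\cdot(-1)\cdot 2^{n-1}=-2^{n-1+t}$. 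Thus, when $x_{0}=\dots=x_{t-1}=0$, $x_{t}=1$, $x_{t+1}=\dots=x_{n-1}=0$ --- which is exactly the condition $x\in I_{t}\setminus I_{t+1}$ and $x-e_{t}\in I_{n}$ --- we obtain $\Sigma_{n}(x)=-2^{n-1+t}$ and $K_{2^{n}}(x)=2^{t-1}$; in all remaining cases ($x\notin I_{n}$ with at least two of $x_{0},\dots,x_{n-1}$ equal to $1$) no term survives, so $\Sigma_{n}(x)=0$ and $K_{2^{n}}(x)=0$. These are precisely the three cases of the claim.

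The main obstacle will be the pointwise evaluation of $\Sigma_{n}$: for each $x$ one must sort out which of the $n$ products $2^{i}r_{i}(x)\prod_{l\ne i}(1+r_{l}(x))$ are nonzero, and then recognize that the surviving-term condition --- exactly one coordinate among $x_{0},\dots,x_{n-1}$ equals $1$, and it is $x_{t}$ --- coincides with the coordinate description of the set $\{x\in I_{t}\setminus I_{t+1}:x-e_{t}\in I_{n}\}$. The change-of-order identity for $2^{n}K_{2^{n}}$, the factorization of $\Sigma_{n}$, and the arithmetic in each of the three cases are routine.
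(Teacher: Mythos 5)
Your argument is correct and complete. Note that the paper does not prove this lemma at all: it is quoted from G\'at's paper \cite{G1}, where it is obtained as a corollary of a recursive analysis (essentially iterating the doubling identity $2K_{2^{k+1}}=(1+r_{k})K_{2^{k}}+D_{2^{k}}$, which is the $m=2^{k}$ case of Fine's Lemma~\ref{Fine}). Your route is a genuinely different, fully self-contained computation: reversing the order of summation to get $2^{n}K_{2^{n}}=2^{n}D_{2^{n}}-\sum_{j=0}^{2^{n}-1}jw_{j}$, and then factorizing $\sum_{j=0}^{2^{n}-1}jw_{j}=\sum_{i=0}^{n-1}2^{i}r_{i}\prod_{l\neq i}(1+r_{l})$ so that the pointwise evaluation reduces to noting that $1+r_{l}(x)\in\{0,2\}$. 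All three case computations check out (including the boundary case $t=0$, where the value $2^{t-1}=1/2$ is consistent), and your identification of the surviving-term condition with $x\in I_{t}\setminus I_{t+1}$, $x-e_{t}\in I_{n}$ is exactly right. What the inductive route buys is that it recycles machinery already present in the paper (Lemmas~\ref{Pl} and~\ref{Fine}); what your closed-form computation buys is transparency --- the three cases of the statement fall out simultaneously from a single explicit formula rather than being tracked through an induction.
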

\begin{lemma}[Fine \cite{Fine}, Lemma 2]\label{Fine}
	Let $n=2^{k}+m$,  where $k,m\in\mathbb{N}$ and $m\leq 2^{k}$. Then
	\[
		nK_{n}=2^{k}K_{2^{k}}+mD_{2^{k}}+r_{k}mK_{m}.
	\]
\end{lemma}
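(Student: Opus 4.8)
The natural approach is purely combinatorial, working directly from the definition $nK_n=\sum_{j=1}^n D_j$. First I would split this sum at the index $2^k$: the block $\sum_{j=1}^{2^k}D_j$ equals $2^kK_{2^k}$ by the very definition of the Fej\'er kernel, so everything reduces to showing that the tail $\sum_{j=2^k+1}^{2^k+m}D_j$ equals $mD_{2^k}+r_kmK_m$.

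The key step is the ``shift identity''
\[
D_{2^k+i}=D_{2^k}+r_kD_i,\qquad 0\le i\le 2^k,
\]
which I would prove using the multiplicative structure of the Walsh--Paley system: if $0\le s<2^k$ then the binary digits of $s$ all lie in positions $<k$, hence $w_{2^k+s}=w_{2^k}w_s=r_kw_s$. Writing $D_{2^k+i}=\sum_{l=0}^{2^k+i-1}w_l$ and splitting off $\sum_{l=0}^{2^k-1}w_l=D_{2^k}$ leaves $\sum_{s=0}^{i-1}w_{2^k+s}=r_k\sum_{s=0}^{i-1}w_s=r_kD_i$, which is the identity. The hypothesis $m\le 2^k$ is exactly what keeps the needed indices $i=1,\dots,m$ inside the admissible range $i\le 2^k$. (Alternatively, the same identity can be extracted from Lemma \ref{Dbont} by a reflection argument, but the digit computation is shorter.)

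Finally, setting $j=2^k+i$ and summing the shift identity over $i=1,\dots,m$ gives
\[
\sum_{j=2^k+1}^{2^k+m}D_j=\sum_{i=1}^m\bigl(D_{2^k}+r_kD_i\bigr)=mD_{2^k}+r_k\sum_{i=1}^m D_i=mD_{2^k}+r_kmK_m,
\]
again by the definition of $K_m$; adding back $2^kK_{2^k}$ yields the asserted formula.

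There is no deep obstacle here: the whole argument is a bookkeeping exercise. The only point that deserves attention is verifying that the shift identity $D_{2^k+i}=D_{2^k}+r_kD_i$ is valid precisely on the range governed by $m\le 2^k$, and correctly tracking the Rademacher factor $r_k$ that records the ``overshoot'' past $2^k$.
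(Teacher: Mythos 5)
Your proof is correct and complete. The paper gives no proof of this lemma (it is quoted from Fine's original article), so there is nothing to compare against, but your argument is the standard one: the multiplicativity $w_{2^k+s}=r_kw_s$ for $0\le s<2^k$ gives the shift identity $D_{2^k+i}=D_{2^k}+r_kD_i$ on exactly the range $i\le 2^k$ guaranteed by $m\le 2^k$, and summing over $i=1,\dots,m$ and adding the block $\sum_{j=1}^{2^k}D_j=2^kK_{2^k}$ yields the asserted formula.
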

\begin{lemma}[Yano \cite{Y1}]
	Let $n\in \mathbb{P}$, then
	\[
	\left\|K_{n}\right\|_{1}\leq 2.
	\]\end{lemma}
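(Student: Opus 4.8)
The plan is to prove $\|K_n\|_1\le 2$ by strong induction on $n$, using Fine's recursion (Lemma \ref{Fine}) to pass to smaller indices, with the dyadic case $n=2^k$ playing the double role of induction anchor and of building block inside the recursion.

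First I would settle the case $n=2^k$ by computing $\|K_{2^k}\|_1$ exactly from G\'at's description in Lemma \ref{K2n}. For each $t<k$ the level set $\{x\in I_t\setminus I_{t+1}: x-e_t\in I_k\}$ has measure $2^{-k}$ and there $|K_{2^k}|=2^{t-1}$, while $|K_{2^k}|=(2^k+1)/2$ on $I_k$, of measure $2^{-k}$. Summing,
\[
\|K_{2^k}\|_1=\sum_{t=0}^{k-1}2^{t-1}2^{-k}+\frac{2^k+1}{2}\,2^{-k}=\frac{1-2^{-k}}{2}+\frac{1+2^{-k}}{2}=1,
\]
and for $k=0$ we have $K_1=D_1=w_0$, so $\|K_{2^k}\|_1=1$ for all $k\in\mathbb{N}$. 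In particular the claim holds whenever $n$ is a power of $2$, which also covers the base case $n=1$.

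Next, for a general $n$ that is not a power of $2$, write $n=2^k+m$ with $k:=|n|$ and $1\le m<2^k$, so that in particular $m<n$ and $m\le 2^k$, making Lemma \ref{Fine} applicable: $nK_n=2^kK_{2^k}+mD_{2^k}+r_k mK_m$. Taking $L_1$-norms and using $\|K_{2^k}\|_1=1$ from the first step, $\|D_{2^k}\|_1=\int_{I_k}2^k\,d\mu=1$ from Paley's lemma (Lemma \ref{Pl}), $|r_k|\equiv 1$, and the induction hypothesis $\|K_m\|_1\le 2$, we obtain
\[
n\|K_n\|_1\le 2^k+m+2m=2^k+3m\le 2^{k+1}+2m=2n,
\]
where the middle inequality uses $m<2^k$. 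Hence $\|K_n\|_1\le 2$, completing the induction.

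The only step requiring genuine care is the first one: one must get the measures of the level sets in G\'at's formula right so that the exact value $\|K_{2^k}\|_1=1$ emerges (in fact any bound $\|K_{2^k}\|_1\le 1$ would already suffice for the argument). Once that is in hand, the recursion from Fine's lemma together with the elementary equivalence $2^k+3m\le 2n\iff m\le 2^k$ makes the inductive step entirely routine.
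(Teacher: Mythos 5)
Your proof is correct. The paper itself gives no argument for this lemma --- it is quoted as Yano's classical result from \cite{Y1} --- so there is nothing to compare line by line; but your two steps both check out. The exact computation $\left\|K_{2^k}\right\|_{1}=1$ from G\'at's description is right (the level set $I_k(e_t)\subset I_t\setminus I_{t+1}$ indeed has measure $2^{-k}$, and the two halves $\frac{1-2^{-k}}{2}$ and $\frac{1+2^{-k}}{2}$ sum to $1$; alternatively one could just note $K_{2^k}\ge 0$ and $\widehat{K_{2^k}}(0)=1$). The inductive step via Fine's recursion is also sound: with $n=2^k+m$, $1\le m<2^k$, one gets $n\left\|K_n\right\|_1\le 2^k+m+2m\le 2^{k+1}+2m=2n$, and the strong induction is well founded since $m<n$ and the case $m=0$ is exactly the dyadic anchor. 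This is in substance Yano's original argument, and it has the pleasant feature of using only auxiliary facts already stated in the paper (Paley's lemma, G\'at's formula, Fine's recursion), rather than the cruder route through Lemma~\ref{nKn}, which would only yield a constant like $6$ instead of $2$.
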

In 2018, Toledo improved Yano's classical result.
\begin{lemma}[Toledo \cite{Tol}]\label{Tol}  
	\[
	\sup_{n\in\mathbb{P}}\|K_{n}\|_{1}= \frac{17}{15}.
	\]
\end{lemma}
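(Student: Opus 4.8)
The plan is to reduce everything to the dyadic scales via Fine's identity and then carry out a careful, support‑aware $L_{1}$ computation. First I would record the two exact identities $\|D_{2^{k}}\|_{1}=1$ and $\|K_{2^{k}}\|_{1}=1$ for all $k\in\mathbb{N}$. The first is immediate from Lemma \ref{Pl}: $D_{2^{k}}$ equals $2^{k}$ on $I_{k}$ and $0$ elsewhere, so $\int_{G}|D_{2^{k}}|=2^{k}\mu(I_{k})=1$. The second follows by integrating the explicit formula of Lemma \ref{K2n}: for each $t<k$ the set on which $K_{2^{k}}=2^{t-1}$ has measure $2^{-k}$, while $K_{2^{k}}=(2^{k}+1)/2$ on $I_{k}$ of measure $2^{-k}$, and $\sum_{t=0}^{k-1}2^{t-1-k}+(2^{k}+1)2^{-k-1}=1$.

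Next I would iterate Lemma \ref{Fine}. Writing the binary expansion $n=2^{k_{0}}+2^{k_{1}}+\dots+2^{k_{J}}$ with $k_{0}>k_{1}>\dots>k_{J}\ge0$ and setting $m_{j}:=2^{k_{j+1}}+\dots+2^{k_{J}}$, repeated application of Lemma \ref{Fine} to the pieces $2^{k_{j}}K_{2^{k_{j}}}+m_{j}D_{2^{k_{j}}}+r_{k_{j}}m_{j}K_{m_{j}}$ telescopes to
\[
nK_{n}=\sum_{j=0}^{J}\varepsilon_{j}\bigl(2^{k_{j}}K_{2^{k_{j}}}+m_{j}D_{2^{k_{j}}}\bigr),\qquad
\varepsilon_{j}:=\prod_{i=0}^{j-1}r_{k_{i}},
\]
with $\varepsilon_{0}:=1$ and the understanding that the $j=J$ summand is $2^{k_{J}}K_{2^{k_{J}}}$ since $m_{J}=0$. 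Note that $\varepsilon_{j}$ depends only on $x_{k_{0}},\dots,x_{k_{j-1}}$, and that $I_{k_{0}}\subset I_{k_{1}}\subset\dots\subset I_{k_{J}}$.

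Then I would compute $\|K_{n}\|_{1}=\frac1n\int_{G}|nK_{n}|$ by partitioning $G$ according to the index of the first nonzero coordinate of $x$, refined by the values of $x_{k_{0}},x_{k_{1}},\dots$. On each atom of this partition every $D_{2^{k_{j}}}(x)$ and every $K_{2^{k_{j}}}(x)$ is an explicit constant (by Lemmas \ref{Pl} and \ref{K2n}: $D_{2^{k_{j}}}(x)\neq0$ precisely when $x\in I_{k_{j}}$, and $K_{2^{k_{j}}}(x)$ equals $(2^{k_{j}}+1)/2$, $2^{t-1}$, or $0$ according as $x$ has no nonzero entry, exactly one nonzero entry at position $t$, or at least two nonzero entries among the first $k_{j}$ coordinates), so $nK_{n}(x)$ is an explicit signed combination of known constants, the signs being the values of $\varepsilon_{j}$. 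Summing the absolute values against the atom measures, and then maximizing over $n$, should produce the supremum $\tfrac{17}{15}$, attained in the limit along a sequence $n_{m}$ with two‑periodic binary expansion (for instance $n_{m}=(4^{m+1}-1)/3$, whose digits are $\ldots0101$), the denominator $15=16-1$ coming from a geometric series in powers of $4^{2}$.

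The hard part will be this $L_{1}$ estimate. The naive triangle inequality applied to the displayed decomposition is hopeless, since $\sum_{j}m_{j}$ can be of order $n\log n$; one must use the nesting $I_{k_{0}}\subset I_{k_{1}}\subset\dots$ and the sign pattern $\{\varepsilon_{j}\}$ to capture the cancellations. For instance, on $I_{k_{0}}$ one has $x_{k_{1}}=\dots=x_{k_{J}}=0$, hence $\varepsilon_{j}=(-1)^{x_{k_{0}}}$ for all $j\ge1$, so $nK_{n}=A_{0}+(-1)^{x_{k_{0}}}\sum_{j\ge1}A_{j}$ with $A_{j}=2^{k_{j}}(2^{k_{j}}+1)/2+m_{j}2^{k_{j}}$, and the two halves $x_{k_{0}}=0$ and $x_{k_{0}}=1$ produce addition and cancellation respectively; analogous bookkeeping on the remaining atoms shows that the partial contributions decay geometrically in the relevant parameter, keeping the total bounded. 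Yano's inequality $\|K_{n}\|_{1}\le2$ — or the very sharpening being proved here — is convenient for controlling the low‑order block $K_{m_{j}}$ when $n$ is close to a power of two, reducing the computation to a finite, explicit optimization.
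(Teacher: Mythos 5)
This lemma is not proved in the paper at all: it is quoted from Toledo \cite{Tol}, where the identity $\sup_n\|K_n\|_1=\tfrac{17}{15}$ is the main theorem of a full-length article. So your proposal must stand on its own. Its preparatory part is sound: $\|D_{2^k}\|_1=\|K_{2^k}\|_1=1$ do follow from Lemmas \ref{Pl} and \ref{K2n} by the computation you indicate; the telescoped form of Lemma \ref{Fine}, $nK_n=\sum_{j}\varepsilon_j\bigl(2^{k_j}K_{2^{k_j}}+m_jD_{2^{k_j}}\bigr)$ with $\varepsilon_j=\prod_{i<j}r_{k_i}$, is correct; and your description of $nK_n$ as an explicit signed constant on each atom determined by the first two nonzero coordinates of $x$ and the bits $x_{k_0},\dots,x_{k_J}$ is accurate. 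This is indeed the right framework, and at this level of generality it matches the strategy Toledo actually carries out. (One small inaccuracy: $\sum_j m_j$ is not of order $n\log n$; since $k_i\le k_0-i$ one gets $\sum_j m_j\le 2\cdot 2^{k_0}\le 2n$, so the naive triangle inequality gives the bound $3$ rather than diverging — still far from $\tfrac{17}{15}$, so your point survives.)

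The genuine gap is that everything after ``should produce the supremum $\tfrac{17}{15}$'' is a program, not an argument, and that is exactly where the whole content of the lemma lives. Two separate things must be proved and neither is: (i) a uniform upper bound $\|K_n\|_1\le\tfrac{17}{15}$ valid for \emph{every} binary expansion $\{k_0>\dots>k_J\}$, which requires summing the signed atom values over all atoms, exploiting the cancellation pattern of the $\varepsilon_j$ quantitatively, and then optimizing the resulting expression over all finite decreasing sequences of exponents; and (ii) a matching lower bound, i.e.\ a verified computation showing $\|K_{n_m}\|_1\to\tfrac{17}{15}$ along some explicit sequence. Your sentence ``analogous bookkeeping on the remaining atoms shows that the partial contributions decay geometrically, keeping the total bounded'' asserts the conclusion of (i) without performing it, and the identification of the extremizing sequence $n_m=(4^{m+1}-1)/3$ is a (plausible) guess, not a computation. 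Moreover, if, as you yourself indicate, the supremum is only approached in the limit and never attained, then the ``finite, explicit optimization'' you appeal to at the end cannot be finite: it is an optimization over an infinite family together with a limiting argument. As written, the proposal establishes nothing sharper than Yano's classical bound $\|K_n\|_1\le 2$ \cite{Y1}; it should be regarded as a correct outline of the shape of Toledo's proof rather than a proof of the stated equality.
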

The following lemma is a known consequence of Lemma \ref{Pl}, Lemma \ref{K2n} and Lemma \ref{Fine}.
\begin{lemma}\label{nKn}
Let $n\in\mathbb{P}$. Then we have 
\begin{align*}
	n|K_{n}|\leq3\sum_{l=0}^{|n|}2^{l}K_{2^{l}}.
\end{align*}
\end{lemma}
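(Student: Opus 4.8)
The plan is to argue by strong induction on $n$, using Fine's identity (Lemma \ref{Fine}) to peel off the leading dyadic block of $n$. I would write $n = 2^{k} + m$ with $k := |n|$, so that $0 \le m < 2^{k}$ — this lies in the range $m \le 2^{k}$ where Lemma \ref{Fine} applies — giving
\[
n K_{n} = 2^{k} K_{2^{k}} + m D_{2^{k}} + r_{k}\, m K_{m}.
\]
The base case $n = 1$ is immediate ($K_{1} = D_{1} = 1$, and the right-hand side equals $3K_{1} = 3$), and the degenerate case $m = 0$ is just as easy: the last two terms then vanish, so $n|K_{n}| = 2^{k} K_{2^{k}} \le 3\cdot 2^{k} K_{2^{k}}$, which is one of the nonnegative summands on the right.

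For the inductive step I would assume $1 \le m < 2^{k}$ and that the estimate holds for every positive integer below $n$, in particular for $m$ (note $|m| \le k-1$). Taking absolute values in Fine's identity and using $|r_{k}| = 1$ together with $K_{2^{k}}, D_{2^{k}} \ge 0$ (Lemmas \ref{K2n} and \ref{Pl}) yields
\[
n|K_{n}| \le 2^{k} K_{2^{k}} + m D_{2^{k}} + m|K_{m}|.
\]
The key point is to absorb the Dirichlet term into a multiple of $K_{2^{k}}$: by Lemma \ref{Pl}, $D_{2^{k}}$ equals $2^{k}$ on $I_{k}$ and vanishes off it, while by Lemma \ref{K2n}, $K_{2^{k}}$ equals $(2^{k}+1)/2$ on $I_{k}$ and is nonnegative everywhere, so $D_{2^{k}} \le 2K_{2^{k}}$ pointwise. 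Since $m < 2^{k}$, this gives
\[
2^{k} K_{2^{k}} + m D_{2^{k}} \le (2^{k} + 2m)K_{2^{k}} \le 3\cdot 2^{k} K_{2^{k}}.
\]

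Feeding in the induction hypothesis for $m$ and using $|m| \le k-1$,
\[
m|K_{m}| \le 3\sum_{l=0}^{|m|} 2^{l} K_{2^{l}} \le 3\sum_{l=0}^{k-1} 2^{l} K_{2^{l}},
\]
and adding the last two displays closes the induction:
\[
n|K_{n}| \le 3\cdot 2^{k} K_{2^{k}} + 3\sum_{l=0}^{k-1} 2^{l} K_{2^{l}} = 3\sum_{l=0}^{|n|} 2^{l} K_{2^{l}}.
\]
I expect the only genuinely delicate point to be the pointwise comparison $D_{2^{k}} \le 2K_{2^{k}}$ together with the index bookkeeping ($m < 2^{k}$ and $|m| \le |n|-1$), which is exactly what keeps the final constant equal to $3$; everything else is routine. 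One could instead iterate Fine's identity directly rather than package it as an induction, but the inductive form keeps the constant most transparent.
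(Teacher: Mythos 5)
Your proof is correct and follows exactly the route the paper intends: the paper states this lemma without proof, remarking only that it is ``a known consequence of Lemma \ref{Pl}, Lemma \ref{K2n} and Lemma \ref{Fine}'', and your induction via Fine's identity, with the pointwise bound $D_{2^{k}}\leq 2K_{2^{k}}$ coming from Paley's and G\'at's lemmas, is precisely that argument written out in full. The index bookkeeping ($0\le m<2^{k}$, $|m|\le |n|-1$) and the constant $3$ all check out.
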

\begin{lemma}[Blahota \cite{B1}, Lemma 9]
	Let $k,n\in\mathbb{P}$ and $\{t_{k,n}: 1\leq k\leq n\}$ be a finite sequence of real numbers. Then
	\begin{align*}	
		K^{T}_{n}=&D_{2^{|n|}}\sum_{k=1}^{n}t_{k,n}-w_{2^{|n|}-1}t_{1,n}\left(2^{|n|}-1\right)K_{2^{|n|}-1}\\
		&+w_{2^{|n|}-1}\sum_{k=1}^{2^{|n|}-2}\Delta t_{2^{|n|}-k-1,n}kK_{k}+r_{|n|}\sum_{k=1}^{n-2^{|n|}}t_{2^{|n|}+k,n}D_{k}.
	\end{align*}
\end{lemma}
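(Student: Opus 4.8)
The plan is to write $N:=|n|$, so that $2^{N}\le n<2^{N+1}$, and to split the defining sum $K^{T}_{n}=\sum_{k=1}^{n}t_{k,n}D_{k}$ at the dyadic boundary $2^{N}$ into the low block $\sum_{k=1}^{2^{N}-1}t_{k,n}D_{k}$, the single term $t_{2^{N},n}D_{2^{N}}$, and the high block $\sum_{k=2^{N}+1}^{n}t_{k,n}D_{k}$. The guiding idea is that in both the low and the high block one can extract copies of $D_{2^{N}}$ and be left with sums of the lower-order Dirichlet kernels $D_{j}$, $j<2^{N}$: for the high block this uses the elementary splitting $D_{2^{N}+j}=D_{2^{N}}+r_{N}D_{j}$ (valid for $0\le j<2^{N}$, since $w_{2^{N}+i}=r_{N}w_{i}$ for $i<2^{N}$), and for the low block it uses Lemma~\ref{Dbont}.

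First I would carry out these substitutions. In the high block, writing $k=2^{N}+j$ gives $\sum_{j=1}^{n-2^{N}}t_{2^{N}+j,n}\bigl(D_{2^{N}}+r_{N}D_{j}\bigr)$, which already yields the last term $r_{N}\sum_{j=1}^{n-2^{N}}t_{2^{N}+j,n}D_{j}$ of the claim together with a multiple of $D_{2^{N}}$. In the low block, substituting $m=2^{N}-k$ and invoking Lemma~\ref{Dbont} in the form $D_{2^{N}-m}=D_{2^{N}}-w_{2^{N}-1}D_{m}$ gives $\sum_{m=1}^{2^{N}-1}t_{2^{N}-m,n}\bigl(D_{2^{N}}-w_{2^{N}-1}D_{m}\bigr)$. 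Collecting all the $D_{2^{N}}$-coefficients (from the low block, the middle term, and the high block) and reindexing, the total is exactly $\sum_{k=1}^{n}t_{k,n}$, which is the first term of the statement.

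It then remains to turn $-w_{2^{N}-1}\sum_{m=1}^{2^{N}-1}t_{2^{N}-m,n}D_{m}$ into Fej\'er kernels. For this I would use $D_{m}=mK_{m}-(m-1)K_{m-1}$ (immediate from $mK_{m}=\sum_{k=1}^{m}D_{k}$) and perform an Abel rearrangement of $\sum_{m=1}^{2^{N}-1}c_{m}D_{m}$ with $c_{m}:=t_{2^{N}-m,n}$. The boundary term produces $c_{2^{N}-1}(2^{N}-1)K_{2^{N}-1}=t_{1,n}(2^{N}-1)K_{2^{N}-1}$, while the consecutive differences $c_{m}-c_{m+1}=t_{2^{N}-m,n}-t_{2^{N}-m-1,n}=-\Delta t_{2^{N}-m-1,n}$ produce $-\sum_{m=1}^{2^{N}-2}\Delta t_{2^{N}-m-1,n}\,mK_{m}$; multiplying through by $-w_{2^{N}-1}$ gives precisely the two middle terms of the lemma. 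I expect the only delicate point to be the index bookkeeping in the Abel step --- matching the argument $2^{N}-k-1$ of $\Delta t$ and checking the degenerate ranges ($n=1$, $n=2^{N}$, where several sums are empty) --- but no estimate is needed: the whole statement is an exact algebraic identity among the kernels.
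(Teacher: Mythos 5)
Your argument is correct and complete: the split of $\sum_{k=1}^{n}t_{k,n}D_{k}$ at $2^{|n|}$, the identities $D_{2^{|n|}+j}=D_{2^{|n|}}+r_{|n|}D_{j}$ and $D_{2^{|n|}-m}=D_{2^{|n|}}-w_{2^{|n|}-1}D_{m}$ (Lemma~\ref{Dbont}), and the Abel rearrangement via $mK_{m}=\sum_{k=1}^{m}D_{k}$ reproduce all four terms exactly, with the correct sign and index in $\Delta t_{2^{|n|}-k-1,n}$ and the correct total coefficient $\sum_{k=1}^{n}t_{k,n}$ of $D_{2^{|n|}}$. The paper itself only quotes this lemma from \cite{B1} without giving a proof, and your derivation is the standard one such a proof would follow.
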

\begin{corollary}\label{felb}
	Let $k\in\mathbb{P},\ n\in\mathbb{N}$ and $\ \{t_{k,2^{n}}: 1\leq k\leq 2^{n}\}$ be a finite sequence of real numbers.
	Then
	\begin{align*}
		K_{2^{n}}^{T}&=		D_{2^{n}}\sum_{k=1}^{2^{n}}t_{k,2^{n}}-w_{2^{n}-1}t_{1,2^{n}}(2^{n}-1)K_{2^{n}-1}+w_{2^{n}-1}\sum_{k=1}^{2^{n}-2}\Delta t_{2^{n}-k-1,2^{n}}kK_{k}
	\end{align*}
	holds.
\end{corollary}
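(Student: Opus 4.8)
The plan is to obtain Corollary~\ref{felb} as a direct specialization of the preceding lemma (Blahota \cite{B1}, Lemma~9), applied to the index $2^{n}$ in place of the general natural number appearing there. So the first step is simply to substitute $2^{n}$ for the running index in the identity
\[
K^{T}_{m}=D_{2^{|m|}}\sum_{k=1}^{m}t_{k,m}-w_{2^{|m|}-1}t_{1,m}\left(2^{|m|}-1\right)K_{2^{|m|}-1}+w_{2^{|m|}-1}\sum_{k=1}^{2^{|m|}-2}\Delta t_{2^{|m|}-k-1,m}kK_{k}+r_{|m|}\sum_{k=1}^{m-2^{|m|}}t_{2^{|m|}+k,m}D_{k},
\]
taking $m:=2^{n}$, which is legitimate since $2^{n}\in\mathbb{P}$ and the coefficients $\{t_{k,2^{n}}:1\le k\le 2^{n}\}$ are exactly the finite sequence posited in the corollary.

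The only point that needs to be checked is the bookkeeping of orders and sum ranges. Since $2^{n}\le 2^{n}<2^{n+1}$, the definition of the order gives $|2^{n}|=n$, hence $2^{|2^{n}|}=2^{n}$. Substituting this into the first three terms of the displayed identity reproduces verbatim the three terms on the right-hand side of Corollary~\ref{felb}: $D_{2^{n}}\sum_{k=1}^{2^{n}}t_{k,2^{n}}$, then $-w_{2^{n}-1}t_{1,2^{n}}(2^{n}-1)K_{2^{n}-1}$, and then $w_{2^{n}-1}\sum_{k=1}^{2^{n}-2}\Delta t_{2^{n}-k-1,2^{n}}kK_{k}$. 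For the fourth term, the summation index runs from $k=1$ to $k=2^{n}-2^{|2^{n}|}=2^{n}-2^{n}=0$, so the sum is empty and therefore vanishes. This is precisely the structural reason the $r_{|n|}$-term disappears when one restricts to powers of two, and it is worth stating this in one sentence, together with the (vacuous) edge case $n=0$, where $2^{0}=1$, $|1|=0$, and one directly recovers $K_{1}^{T}=t_{1,1}D_{1}=t_{1,1}$, consistent with both sides.

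There is essentially no genuine obstacle here: the statement is a corollary in the strict sense, and the proof amounts to evaluating $|2^{n}|$ and observing that one of the four sums collapses. If anything, the only care required is to make explicit the empty-sum convention (a sum $\sum_{k=1}^{0}$ is zero) so that the reader is not left wondering where the last term went; beyond that, no auxiliary result from the excerpt is even needed apart from Lemma~9 itself.
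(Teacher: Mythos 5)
Your proposal is correct and is exactly the argument the paper intends: the corollary is obtained by substituting $2^{n}$ for the index in Lemma 9 of \cite{B1}, noting $|2^{n}|=n$ so that the first three terms specialize verbatim and the final sum $\sum_{k=1}^{2^{n}-2^{n}}$ is empty. The paper offers no written proof at all, so your explicit bookkeeping (including the empty-sum convention and the $n=0$ edge case) only makes the step more transparent.
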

\begin{lemma}[Persson, Tephnadze and Weisz \cite{PTW} p. 137-138.]\label{3Ts}Let $n\in\mathbb{N}$ and $f\in L_{p}(G)$ for some $1\leq p<\infty$. Then we have inequality
	\[
		\left\|\sigma_{n}(f)-f\right\|_{p}\leq3\sum_{s=0}^{|n|}\frac{2^{s}}{2^{|n|}}\omega_{p}\left(f,\frac{1}{2^{s}}\right).
	\]
\end{lemma}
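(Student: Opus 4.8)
The plan is to deduce the statement from the dyadic case $n=2^{A}$, where $A:=|n|$, which one reads off the explicit description of the Fej\'er kernel in Lemma~\ref{K2n}. First I would treat $n=2^{A}$. The kernel $K_{2^{A}}$ is nonnegative by Lemma~\ref{K2n} and $\int_{G}K_{2^{A}}\,d\mu=1$, so $\sigma_{2^{A}}(f;x)-f(x)=\int_{G}\bigl(f(x+u)-f(x)\bigr)K_{2^{A}}(u)\,d\mu(u)$, and Minkowski's inequality gives $\|\sigma_{2^{A}}(f)-f\|_{p}\le\int_{G}\|f(\cdot+u)-f(\cdot)\|_{p}K_{2^{A}}(u)\,d\mu(u)$. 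Splitting $G=I_{A}\cup\bigcup_{t=0}^{A-1}(I_{t}\setminus I_{t+1})$: on $I_{A}$ we have $|u|<2^{-A}$, so that contribution is $\le\frac{2^{A}+1}{2^{A+1}}\omega_{p}(f,2^{-A})\le\omega_{p}(f,2^{-A})$; and on $I_{t}\setminus I_{t+1}$, by Lemma~\ref{K2n} the kernel equals $2^{t-1}$ on the coset $e_{t}+I_{A}$ (of measure $2^{-A}$) and vanishes elsewhere, while there $|u|<2^{-t}$, so that contribution is $\le\frac{2^{t-1}}{2^{A}}\omega_{p}(f,2^{-t})$. Summing these yields $\|\sigma_{2^{A}}(f)-f\|_{p}\le\sum_{s=0}^{A}\frac{2^{s}}{2^{A}}\omega_{p}(f,2^{-s})$, i.e. the required inequality, even with constant $1$, when $n$ is a power of $2$.

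For general $n$, put $m:=n-2^{A}$, so $0\le m<2^{A}$, $\tfrac{2^{A}}{n}\le1$ and $\tfrac{m}{n}<\tfrac12$. From $w_{2^{A}+j}=w_{2^{A}}w_{j}$ (hence $\widehat{fw_{2^{A}}}(j)=\widehat f(2^{A}+j)$ for $0\le j<2^{A}$) one gets $S_{2^{A}+j}(f)=S_{2^{A}}(f)+w_{2^{A}}S_{j}(fw_{2^{A}})$; summing over $1\le j\le m$, adding $\sum_{k=1}^{2^{A}}S_{k}(f)=2^{A}\sigma_{2^{A}}(f)$, dividing by $n$ and subtracting $f=\tfrac{2^{A}}{n}f+\tfrac{m}{n}f$ produces the decomposition
\[
\sigma_{n}(f)-f=\frac{2^{A}}{n}\bigl(\sigma_{2^{A}}(f)-f\bigr)+\frac{m}{n}\bigl(S_{2^{A}}(f)-f\bigr)+\frac{m}{n}\,w_{2^{A}}\,\sigma_{m}(fw_{2^{A}}),
\]
which could also be read off Fine's Lemma~\ref{Fine} with $k=A$. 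The first summand is covered by the previous paragraph, and for the second, Paley's Lemma~\ref{Pl} gives $S_{2^{A}}(f;x)-f(x)=2^{A}\int_{I_{A}}(f(x+u)-f(x))\,d\mu(u)$, whence $\|S_{2^{A}}(f)-f\|_{p}\le\omega_{p}(f,2^{-A})$.

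The crux is the third summand. Since $m<2^{A}$, only Fourier coefficients of index $<2^{A}$ enter it, so $\sigma_{m}(fw_{2^{A}})=\sigma_{m}\bigl(S_{2^{A}}(fw_{2^{A}})\bigr)$; and $S_{2^{A}}(fw_{2^{A}})=w_{2^{A}}\bigl(S_{2^{A+1}}(f)-S_{2^{A}}(f)\bigr)$. Using that $S_{2^{A+1}}$ is a contraction on $L_{p}$ (by Young's inequality and $\|D_{2^{A+1}}\|_{1}=1$, which follows from Lemma~\ref{Pl}) together with $S_{2^{A+1}}(f)-S_{2^{A}}(f)=S_{2^{A+1}}(f-S_{2^{A}}(f))$, I would estimate
\[
\|\sigma_{m}(fw_{2^{A}})\|_{p}\le\|K_{m}\|_{1}\,\|S_{2^{A+1}}(f)-S_{2^{A}}(f)\|_{p}\le 2\,\|f-S_{2^{A}}(f)\|_{p}\le 2\,\omega_{p}(f,2^{-A}),
\]
where $\|K_{m}\|_{1}\le2$ (by Yano's lemma; Lemma~\ref{Tol} even gives $\tfrac{17}{15}$). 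Substituting the three bounds into the decomposition and using $\tfrac{2^{A}}{n}\le1$, $\tfrac{m}{n}<\tfrac12$,
\[
\|\sigma_{n}(f)-f\|_{p}\le\sum_{s=0}^{A}\frac{2^{s}}{2^{A}}\omega_{p}(f,2^{-s})+\tfrac32\,\omega_{p}(f,2^{-A})=\sum_{s=0}^{A-1}\frac{2^{s}}{2^{A}}\omega_{p}(f,2^{-s})+\tfrac52\,\omega_{p}(f,2^{-A}),
\]
which is at most $3\sum_{s=0}^{|n|}\frac{2^{s}}{2^{|n|}}\omega_{p}(f,2^{-s})$, since $1\le3$ and $\tfrac52\le3$.

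I expect the only genuinely delicate point to be the third summand. The trivial estimate $\|\sigma_{m}(fw_{2^{A}})\|_{p}\le\|K_{m}\|_{1}\|f\|_{p}$ is useless; the key is that, because $m<2^{A}$, the mean $\sigma_{m}(fw_{2^{A}})$ only feels the dyadic block $S_{2^{A+1}}(f)-S_{2^{A}}(f)$ of $f$, which in turn is dominated by $\omega_{p}(f,2^{-A})$ through the contractivity of $S_{2^{A+1}}$. All the rest is routine manipulation of the kernels in Lemmas~\ref{Pl} and \ref{K2n}.
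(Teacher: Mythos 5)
The paper does not prove this lemma at all; it is quoted from the book of Persson, Tephnadze and Weisz, so there is no in-paper argument to compare against. Your proposal is a correct, self-contained proof. The dyadic case is exactly the computation the authors themselves carry out for the term $II$ in the proof of Theorem \ref{BD4_2} (inequality \eqref{ketto}), and your passage to general $n$ via the operator form of Fine's identity,
\[
\sigma_{n}(f)-f=\frac{2^{A}}{n}\bigl(\sigma_{2^{A}}(f)-f\bigr)+\frac{m}{n}\bigl(S_{2^{A}}(f)-f\bigr)+\frac{m}{n}\,w_{2^{A}}\,\sigma_{m}(fw_{2^{A}}),
\]
is the standard route. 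The one genuinely nontrivial step, and the one you handle correctly, is the third term: the naive estimate through Lemma \ref{nKn} combined with \eqref{2n} would produce an extra factor $(|n|-s+1)$ in front of $2^{s}\omega_{p}(f,2^{-s})$, whereas your observation that $\sigma_{m}(fw_{2^{A}})$ depends only on $\widehat{f}(j)$ for $2^{A}\le j<2^{A}+m$, hence equals $\sigma_{m}$ applied to the single dyadic block $w_{2^{A}}(S_{2^{A+1}}(f)-S_{2^{A}}(f))$, reduces it to Yano's bound $\Vert K_{m}\Vert_{1}\le 2$ and the contractivity of $S_{2^{A+1}}$, giving the clean $\omega_{p}(f,2^{-|n|})$ contribution. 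All identities you use ($S_{2^{A}+j}(f)=S_{2^{A}}(f)+w_{2^{A}}S_{j}(fw_{2^{A}})$ for $j<2^{A}$, $\Vert D_{2^{A+1}}\Vert_{1}=1$, $\Vert f-S_{2^{A}}(f)\Vert_{p}\le\omega_{p}(f,2^{-A})$) are valid on the dyadic group, and your bookkeeping of constants ($1$, $\tfrac12$ and $1$ on the three terms, hence $\tfrac52\le 3$ on the top term) is right; in fact you obtain the inequality with a better constant than $3$.
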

\section{On the rate of the approximation}

\begin{theorem}\label{BD4_1}
	Let $f\in L_{p}(G), 1 \leq p \leq \infty$. For every $n\in\mathbb{N},\ \{t_{k,n}: 1\leq k\leq n\}$ be a finite sequence of non-negative numbers such that 
	$$
	\sum_{k=1}^n t_{k,n}=1
	$$
	is satisfied. If the finite sequence $ \{ t_{k,n}: 1\leq k \leq n \}$  is  non-increasing for a  fixed $n$, then 
	\begin{equation*}
		\left\Vert\sigma^{T}_{n}(f)-f\right\Vert_{p}\leq \frac{31}{15}\sum_{k=0}^{|n|-1}2^{k}t_{2^{k},n}
		\omega_{p}\left(f,\frac{1}{2^{k}}\right)+\frac{47}{30}
		\omega_{p}\left(f,\frac{1}{2^{|n|}}\right)
	\end{equation*}
	holds.
\end{theorem}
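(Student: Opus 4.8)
The plan is to pass to the kernel and weigh it against the modulus of continuity. Since $\int_G D_k\,d\mu=1$ for every $k\ge1$ and $\sum_{k=1}^{n}t_{k,n}=1$, the kernel integrates to $1$, hence
\[
\sigma_n^{T}(f;x)-f(x)=\int_G\big(f(x+u)-f(x)\big)K_n^{T}(u)\,d\mu(u),
\]
and Minkowski's inequality for integrals together with the definition of $\omega_p$ gives
\[
\big\|\sigma_n^{T}(f)-f\big\|_p\le\int_G\omega_p\big(f,|u|\big)\,\big|K_n^{T}(u)\big|\,d\mu(u).
\]
So everything reduces to a weighted pointwise estimate of $|K_n^{T}|$.

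Next I would decompose the kernel. From $D_j=jK_j-(j-1)K_{j-1}$ and Abel's transformation, using that $\{t_{k,n}\}_k$ is non-increasing (so $\Delta t_{k,n}\ge0$) and $\sum_{j=1}^{n}j\,\Delta t_{j,n}=1$, one obtains
\[
K_n^{T}=\sum_{j=1}^{n}j\,\Delta t_{j,n}\,K_j ,\qquad \sigma_n^{T}(f)-f=\sum_{j=1}^{n}j\,\Delta t_{j,n}\,\big(\sigma_j(f)-f\big),
\]
a convex combination; Blahota's lemma (of which Corollary~\ref{felb} is the power-of-two case) is the precise repackaging of this identity that also isolates the term $D_{2^{|n|}}$, whose contribution is $S_{2^{|n|}}(f)-f$ with $\|S_{2^{|n|}}(f)-f\|_p\le\omega_p(f,2^{-|n|})$ and feeds the last summand. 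I would then expand each $jK_j$ over powers of two by iterating Fine's Lemma~\ref{Fine}: with $j=\sum_\nu 2^{a_\nu}$, $a_1>a_2>\cdots$, the product $jK_j$ is a $\pm1$-weighted sum of the terms $2^{a_\nu}K_{2^{a_\nu}}$ and $\big(\sum_{\mu>\nu}2^{a_\mu}\big)D_{2^{a_\nu}}$, so $j|K_j|$ is dominated by a non-negative combination of the \emph{explicit} kernels $K_{2^\ell},D_{2^\ell}$ (Lemma~\ref{nKn} being the crude form of this), the only pieces not of power-of-two type being the remainder Fej\'er kernels produced along the recursion.

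The kernel pieces are now integrated against $\omega_p(f,|u|)$ using the exact descriptions. By Paley's Lemma~\ref{Pl}, $D_{2^\ell}$ equals $2^\ell$ on $I_\ell$ and $0$ off it, and $|u|\le2^{-\ell}$ on $I_\ell$, so $\int_G\omega_p(f,|u|)D_{2^\ell}(u)\,d\mu\le\omega_p(f,2^{-\ell})$. From G\'at's Lemma~\ref{K2n} one computes directly
\[
\int_G\omega_p\big(f,|u|\big)K_{2^\ell}(u)\,d\mu\le\frac{1}{2^{\ell+1}}\sum_{t=0}^{\ell-1}2^{t}\,\omega_p\big(f,2^{-t}\big)+\frac{2^{\ell}+1}{2^{\ell+1}}\,\omega_p\big(f,2^{-\ell}\big),
\]
because on the set where $K_{2^\ell}=2^{t-1}$ one has $|u|\le 2^{-t}$, and that set has measure $2^{-\ell}$. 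Finally, for the remainder Fej\'er kernels Toledo's Lemma~\ref{Tol} gives $\|K_m\|_1\le 17/15$. Using these sharp inputs — the value $17/15$ instead of Yano's $2$, and the precise $K_{2^\ell}$ rather than a bound through Lemma~\ref{nKn} alone — is exactly what brings the constants down from $5$ in Theorem~\ref{BNthm} to $\tfrac{31}{15}$ and $\tfrac{47}{30}$.

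Finally I would reassemble: substitute these estimates, interchange the order of summation, collapse the inner sums via the monotonicity identity $\sum_{j:\,|j|\ge\ell}\Delta t_{j,n}=t_{2^{\ell},n}$ (which is what produces the weights $2^{\ell}t_{2^{\ell},n}$), use the elementary inequality $2^{t}(\ell-t+1)\le 2^{\ell}$ for $\ell\ge t$ to avoid a logarithmic loss when the contributions of $f$ at the various dyadic scales pile up, and use $2^{\ell}t_{2^{\ell},n}\le\sum_{k=1}^{2^{\ell}}t_{k,n}\le1$ to fold the top ($\ell=|n|$) terms into the single $\omega_p(f,2^{-|n|})$ summand. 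The main obstacle is precisely this bookkeeping: the pieces must be routed so that the total excess over the ``ideal'' values $2$ and $\tfrac32$ is just one application of Toledo's bound, effectively halved — which is the source of the $\tfrac1{15}$ in $\tfrac{31}{15}=2+\tfrac1{15}$ and $\tfrac{47}{30}=\tfrac32+\tfrac1{15}$ — while the interplay of the remainder Fej\'er kernels, the $D_{2^{\ell}}$ terms, and the non-increasing monotonicity of $\{t_{k,n}\}$ must be controlled without further waste.
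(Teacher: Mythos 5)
The paper's own ``proof'' of Theorem~\ref{BD4_1} is only a pointer to the submitted companion paper \cite{BD3}, so a line-by-line comparison is impossible; but your route contains a concrete gap that prevents it from reaching the stated bound. After you reduce to $\int_G\omega_p(f,|u|)\,|K_n^T(u)|\,d\mu(u)$ and write $K_n^T=\sum_j j\,\Delta t_{j,n}K_j$, every subsequent estimate of $j|K_j|$ --- whether through Lemma~\ref{nKn} or through your iterated Fine expansion $jK_j=\sum_\nu\varepsilon_\nu\bigl(2^{a_\nu}K_{2^{a_\nu}}+m_\nu D_{2^{a_\nu}}\bigr)$ --- contains one term $2^{a}K_{2^{a}}$ for \emph{each} binary digit $a$ of $j$, and by G\'at's Lemma~\ref{K2n} each such term contributes $\tfrac12\sum_{t<a}2^{t}\omega_p(f,2^{-t})$. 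Summing over the digits and then over $j$ with weights $\Delta t_{j,n}$ gives the term $2^{t}\omega_p(f,2^{-t})$ a coefficient that can be as large as $\sum_{a>t}t_{2^{a},n}$, which is \emph{not} $O(t_{2^{t},n})$: already in the Fej\'er case $t_{k,n}=1/n$ it equals $(|n|-t)/n$ against the target $t_{2^t,n}=1/n$. This is exactly the logarithmic factor $(n-s)$ that the paper's own kernel-route result, Theorem~\ref{BD4_2}, cannot avoid. Your proposed repair $2^{t}(\ell-t+1)\le2^{\ell}$ does not work: it trades the weight $2^{t}$ for $2^{\ell}$ while the modulus stays at the coarser scale $2^{-t}$, so the resulting term $2^{\ell}\omega_p(f,2^{-t})$ cannot be absorbed into $2^{\ell}t_{2^{\ell},n}\omega_p(f,2^{-\ell})$, and the target structure is lost.

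A second, related problem is that Toledo's bound $\|K_m\|_1\le 17/15$ cannot actually be exploited inside your framework: $\int_G\omega_p(f,|u|)|K_m(u)|\,d\mu(u)\le\|K_m\|_1\sup_u\omega_p(f,|u|)$ is useless because $K_m$ is not supported near $0$, so the supremum is $\omega_p(f,1)$. The $L^1$ bound only pays off after the M\'oricz--Siddiqi block reduction, which is the missing idea here: group $k$ into dyadic blocks $2^{\ell}\le k<2^{\ell+1}$, write $S_kf-S_{2^{\ell}}f=S_k(g_\ell)$ with $g_\ell:=f-S_{2^{\ell}}f$ so that $\|g_\ell\|_p\le\omega_p(f,2^{-\ell})$ is uniformly small, and bound the block kernel $\sum_{j<2^{\ell}}t_{2^{\ell}+j,n}D_j$ in $L^1$ by $\tfrac{17}{15}\sum_{j}t_{2^{\ell}+j,n}\le\tfrac{17}{15}\,2^{\ell}t_{2^{\ell},n}$ via Abel summation and Lemma~\ref{Tol}. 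Pairing an $L^1$ kernel bound with a single small $L^p$ norm, rather than a pointwise kernel bound with the modulus at scale $|u|$, is what produces the clean weights $2^{\ell}t_{2^{\ell},n}$ and constants of size $1+\tfrac{17}{15}$ with no logarithmic loss. (If you only want the qualitative statement with a worse constant, note that your own identity $\sigma_n^T(f)-f=\sum_j j\Delta t_{j,n}(\sigma_j(f)-f)$ combined with Lemma~\ref{3Ts}, the bound $j\le 2\cdot 2^{|j|}$ and $\sum_{j\ge 2^{s}}\Delta t_{j,n}=t_{2^{s},n}$ yields $6\sum_{s}2^{s}t_{2^{s},n}\omega_p(f,2^{-s})$ in three lines.)
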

\begin{proof}
	In our paper \cite{BD3} we proved a similar theorem for Walsh-Kaczmarz system. Based on that proof, using the same method with Lemma \ref{Tol} we get our result easily.
\end{proof}

Theorem \ref{BD4_2} is a generalized (from N\"orlund and $T$ to matrix transform summation) and improved version of Theorem 2 from \cite{AT}.

\begin{theorem}\label{BD4_2}
	Let the finite sequence $\ \{t_{k,2^{n}}: 1\leq k\leq 2^{n}\}$ of non-negative numbers be non-decreasing for all $n\in\mathbb{N}$ and \begin{equation}\label{sum1}
		\sum_{k=1}^{2^{n}}t_{k,2^{n}}=1.
	\end{equation}
	Then for any $f\in L_{p}(G)$ for some $1\leq p<\infty$, we have the following inequality
	\begin{align*}
		\left\|\sigma^{T}_{2^{n}}(f)-f\right\|_{p}\leq& \sum_{s=0}^{n-1}\frac{2^{s}}{2^{n}}\omega_{p}\left(f,\frac{1}{2^{s}}\right)+3\sum_{s=0}^{n-1}(n-s)2^{s}t_{2^{n}-2^{s}+1,n}\omega_{p}\left(f,\frac{1}{2^{s}}\right)\\
		&+\left(2+\frac{1}{2^{n}}\right)\omega_{p}\left(f,\frac{1}{2^{n}}\right).
	\end{align*}
\end{theorem}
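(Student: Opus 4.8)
The plan is to start from the kernel decomposition for dyadic indices provided by Corollary~\ref{felb}, namely
\[
K_{2^{n}}^{T}=D_{2^{n}}\sum_{k=1}^{2^{n}}t_{k,2^{n}}-w_{2^{n}-1}t_{1,2^{n}}(2^{n}-1)K_{2^{n}-1}+w_{2^{n}-1}\sum_{k=1}^{2^{n}-2}\Delta t_{2^{n}-k-1,2^{n}}kK_{k}.
\]
Since $\sum_{k=1}^{2^{n}}t_{k,2^{n}}=1$ by \eqref{sum1}, the first term is exactly $D_{2^{n}}$, so convolving with $f$ turns it into the partial sum $S_{2^{n}}(f)$, and Lemma~\ref{3Ts} (with $|2^{n}|=n$) controls $\|S_{2^{n}}(f)-f\|_{p}$ — in fact $\|\sigma_{2^{n}}(f)-f\|_{p}$, but the $\sigma$-means at a power of two coincide with $S_{2^{n}}$ up to the same modulus bound; this is where the first sum $\sum_{s=0}^{n-1}\tfrac{2^{s}}{2^{n}}\omega_{p}(f,2^{-s})$ and part of the $\omega_{p}(f,2^{-n})$ term will come from. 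The remaining two pieces of the kernel I would estimate in $L_{1}$ norm and then use $\|g*h\|_{p}\le\|g\|_{p}\|h\|_{1}$ together with the standard trick of inserting a Walsh polynomial of best approximation (so that differences get replaced by moduli of continuity).

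The heart of the argument is the third term, the sum $w_{2^{n}-1}\sum_{k=1}^{2^{n}-2}\Delta t_{2^{n}-k-1,2^{n}}kK_{k}$. Because $\{t_{k,2^{n}}\}$ is non-decreasing in $k$, the differences $\Delta t_{2^{n}-k-1,2^{n}}=t_{2^{n}-k-1,2^{n}}-t_{2^{n}-k,2^{n}}$ are all non-positive, so $|\Delta t_{2^{n}-k-1,2^{n}}|=-\Delta t_{2^{n}-k-1,2^{n}}$ and these absolute values telescope nicely. I would substitute the pointwise bound $k|K_{k}|\le 3\sum_{l=0}^{|k|}2^{l}K_{2^{l}}$ from Lemma~\ref{nKn}, interchange the order of summation in $k$ and $l$, and collect the coefficient of each $2^{l}K_{2^{l}}$; after the telescoping this coefficient will be (a constant times) $t_{2^{n}-2^{l}+1,2^{n}}$ minus something, which is where the factor $3(n-s)2^{s}t_{2^{n}-2^{s}+1,n}$ in the statement is produced (the $(n-s)$ counting how many dyadic blocks contribute). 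Each $2^{l}K_{2^{l}}$ has $L_{1}$-norm $2^{l}\cdot 1$ by Lemma~\ref{Pl}, and convolving with the polynomial-of-best-approximation correction converts the weight into $\omega_{p}(f,2^{-l})$ with $l$ renamed $s$.

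The middle term $-w_{2^{n}-1}t_{1,2^{n}}(2^{n}-1)K_{2^{n}-1}$ I would handle with Toledo's (or even Yano's) bound $\|K_{2^{n}-1}\|_{1}\le 17/15\le 2$ from Lemma~\ref{Tol}, so its contribution is at most $2(2^{n}-1)t_{1,2^{n}}\,\|f-P\|_{p}$ for a suitable polynomial $P$; since $\{t_{k,2^{n}}\}$ is non-decreasing and sums to $1$ we get $t_{1,2^{n}}\le 2^{-n}$, which bounds $(2^{n}-1)t_{1,2^{n}}$ by $1$ and feeds into the last term $(2+2^{-n})\omega_{p}(f,2^{-n})$ after choosing $P$ with $\|f-P\|_{p}\le\omega_{p}(f,2^{-n})$. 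Assembling the three bounds and tracking constants gives the claimed inequality. The main obstacle I anticipate is the bookkeeping in the double-sum interchange for the third term: getting the index shift exactly right so that the surviving coefficient is $t_{2^{n}-2^{s}+1,n}$ and not an off-by-one variant, and making sure the telescoped remainder has the correct sign so it can simply be discarded. Everything else is a routine convolution-norm estimate plus the substitution of the best-approximation polynomial.
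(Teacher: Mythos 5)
Your skeleton is the right one and matches the paper's: start from Corollary~\ref{felb}, use condition \eqref{sum1}, exploit that the non-decreasing monotonicity makes the differences $\Delta t_{2^{n}-k-1,2^{n}}$ one-signed, insert Lemma~\ref{nKn}, interchange the order of summation and telescope to produce the coefficient $(n-s)2^{s}t_{2^{n}-2^{s}+1,2^{n}}$. That part of your plan is exactly what the paper does for the term $IV$. The gap is in how you propose to turn kernel bounds into moduli of continuity. For the $K_{2^{n}-1}$ term (and likewise for each block $2^{l}K_{2^{l}}$ coming out of Lemma~\ref{nKn}) you want to use an $L_{1}$ bound on the kernel together with a best-approximating Walsh polynomial $P$ and $\left\|f-P\right\|_{p}\leq\omega_{p}\left(f,2^{-n}\right)$. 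That device works for partial sums because $S_{m}(P)=P$ when $P\in\mathcal{P}_{m}$, but Fej\'er means do not reproduce polynomials: for $P=w_{2^{n}-1}\in\mathcal{P}_{2^{n}}$ one has $S_{k}(P)=0$ for all $k\leq 2^{n}-1$, hence $\sigma_{2^{n}-1}(P)-P=-P$, whose norm is $1$. So the leftover term $\sigma_{2^{n}-1}(P)-P$ is not negligible, and $\left\|K_{2^{n}-1}\right\|_{1}\leq 17/15$ plus best approximation cannot by itself yield a bound of the form $c\,\omega_{p}\left(f,2^{-n}\right)$. The same objection applies to your claim that ``$\left\|2^{l}K_{2^{l}}\right\|_{1}=2^{l}$ \dots converts the weight into $\omega_{p}(f,2^{-l})$'': the kernel $K_{2^{l}}$ is not supported in $I_{l}$, so no localization to the scale $2^{-l}$ comes from its $L_{1}$ norm alone.

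What actually closes this gap in the paper is the substitution $\left(2^{n}-1\right)K_{2^{n}-1}=2^{n}K_{2^{n}}-D_{2^{n}}$ followed by G\'at's explicit formula (Lemma~\ref{K2n}): $K_{2^{m}}$ is non-negative, equals $\frac{2^{m}+1}{2}$ on $I_{m}$ and $2^{t-1}$ on the cosets $e_{t}+I_{m}$, $t<m$, and vanishes elsewhere. Splitting the integral over these pieces and applying the generalized Minkowski inequality gives the key localized estimate $2^{m}\int_{G}\left\|f(\cdot+u)-f(\cdot)\right\|_{p}K_{2^{m}}(u)\,d\mu(u)\leq\sum_{s=0}^{m}2^{s}\omega_{p}\left(f,2^{-s}\right)$, which is then fed both into the middle term and into every block $2^{l}K_{2^{l}}$ of the main sum. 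This also corrects your bookkeeping: the first sum $\sum_{s=0}^{n-1}\frac{2^{s}}{2^{n}}\omega_{p}\left(f,2^{-s}\right)$ in the statement is produced by the $2^{n}K_{2^{n}}$ piece of the middle term, not by the leading $D_{2^{n}}$ term, which by Paley's lemma (Lemma~\ref{Pl}) is supported on $I_{n}$ and contributes exactly one $\omega_{p}\left(f,2^{-n}\right)$; Lemma~\ref{3Ts} concerns the Fej\'er means $\sigma_{n}$, not $S_{2^{n}}$, and is not used (nor usable) for that term.
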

\begin{proof}
	Using Corollary \ref{felb}, condition \eqref{sum1} and equality
	\[
		\left(2^{n}-1\right)K_{2^{n}-1}=2^{n}K_{2^{n}}-D_{2^{n}},
	\]
	we obtain
	\begin{align*}
		\sigma^{T}_{2^{n}}(f;x)-&f(x)\\
		=&\int_{G}D_{2^{n}}(u)(f(x+u)-f(x))d\mu(u)\\&-\int_{G}w_{2^{n}-1}(u)t_{1,2^{n}}2^{n}K_{2^{n}}(u)(f(x+u)-f(x))d\mu(u)\\
		&+\int_{G}w_{2^{n}-1}(u)t_{1,2^{n}}D_{2^{n}}(u)(f(x+u)-f(x))d\mu(u)\\&+\int_{G}w_{2^{n}-1}(u)\sum_{k=1}^{2^{n}-2}\Delta t_{2^{n}-k-1,2^{n}}kK_{k}(u)(f(x+u)-f(x))d\mu(u)\\
		=:&I+II+III+IV.
	\end{align*}
	Based on the generalized Minkowski's inequality, Lemma \ref{Pl}, condition \eqref{sum1} and the monotonicity of the sequence $\{t_{k,2^{n}}: 1\leq k\leq 2^{n}\}$, we obtain inequalities
	\begin{align}\label{egy}
		\left\|I\right\|_{p}&\leq\int_{I_{n}}D_{2^{n}}(u)\left\|f(.+u)-f(.)\right\|_{p}d\mu(u)\nonumber\\&\leq \omega_{p}\left(f,\frac{1}{2^{n}}\right)
 	\end{align}
 	and 
 	\begin{align}\label{harom}
 		\left\|III\right\|_{p}&\leq t_{1,2^{n}}\int_{I_{n}}D_{2^{n}}(u)\left\|f(.+u)-f(.)\right\|_{p}d\mu(u)\nonumber\\&\leq  \frac{1}{2^{n}}\omega_{p}\left(f,\frac{1}{2^{n}}\right).
 	\end{align}
  	Combining the  generalized Minkowski's inequality and Lemma \ref{K2n}, we get
   \begin{align}\label{ketto}
 	\left\|II\right\|_{p}
 	\leq&\int_{G}\left\|f(.+u)-f(.)\right\|_{p}K_{2^{n}}(u)d\mu(u)\nonumber\\
 	=&\int_{I_{n}}\left\|f(.+u)-f(.)\right\|_{p}K_{2^{n}}(u)d\mu(u)\nonumber\\
 	&+\sum_{s=0}^{n-1}\int_{I_{n}(e_{s})}\left\|f(.+u)-f(.)\right\|_{p}K_{2^{s}}(u)d\mu(u)\nonumber\\
 	\leq&\frac{2^{n}+1}{2}\int_{I_{n}}\left\|f(.+u)-f(.)\right\|_{p}d\mu(u)\nonumber\\
 	&+\sum_{s=0}^{n-1}2^{s-1}\int_{I_{n}(e_{s})}\left\|f(.+u)-f(.)\right\|_{p}d\mu(u)\nonumber\\ 	\leq&\frac{2^{n}+1}{2}\frac{1}{2^{n}}\omega_{p}\left(f,\frac{1}{2^{n}}\right)+\sum_{s=0}^{n-1}\frac{2^{s-1}}{2^{n}}\omega_{p}\left(f,\frac{1}{2^{s}}\right)\nonumber\\
 	\leq&\omega_{p}\left(f,\frac{1}{2^{n}}\right)+\sum_{s=0}^{n-1}\frac{2^{s}}{2^{n}}\omega_{p}\left(f,\frac{1}{2^{s}}\right).
 \end{align}
 On the other side, it means that inequality
 \begin{align}\label{2n}
	2^n\int_{G}\left\|f(.+u)-f(.)\right\|_{p}K_{2^{n}}(u)d\mu(u)\leq \sum_{s=0}^{n}2^{s}\omega_{p}\left(f,\frac{1}{2^{s}}\right)
  \end{align}
	is true for all $n\in\mathbb{N}$. 
	
	From the generalized Minkowski's inequality, Lemma \ref{nKn} and inequality \eqref{2n}, we get that
 	\begin{align}\label{negy}
 		\left\|IV\right\|_{p}&\leq\sum_{k=0}^{2^{n}-2}\left|\Delta t_{2^{n}-k-1,2^{n}}\right|\int_{G}\left\|f(.+u)-f(.)\right\|_{p}k|K_{k}(u)|d\mu(u)\nonumber\\
 		&\leq 3\sum_{j=0}^{n-1}\sum_{k=2^{j}-1}^{2^{j+1}-2}\left|\Delta t_{2^{n}-k-1,2^{n}}\right|\sum_{l=0}^{j}2^{l}\int_{G}\left\|f(.+u)-f(.)\right\|_{p}K_{2^{l}}(u)d\mu(u)\nonumber\\
 		&\leq 3\sum_{j=0}^{n-1}\sum_{k=2^{j}-1}^{2^{j+1}-2}\left|\Delta t_{2^{n}-k-1,2^{n}}\right|\sum_{l=0}^{j}\sum_{s=0}^{l}2^{s}\omega_{p}\left(f,\frac{1}{2^{s}}\right)\nonumber\\
 		&=3\sum_{j=0}^{n-1}( t_{2^{n}-2^{j}+1,2^{n}}-t_{2^{n}-2^{j+1}+1,2^{n}})\sum_{l=0}^{j}\sum_{s=0}^{l}2^{s}\omega_{p}\left(f,\frac{1}{2^{s}}\right)\nonumber\\
 		&=3\sum_{l=0}^{n-1}\sum_{j=l}^{n-1}( t_{2^{n}-2^{j}+1,2^{n}}-t_{2^{n}-2^{j+1}+1,2^{n}})\sum_{s=0}^{l}2^{s}\omega_{p}\left(f,\frac{1}{2^{s}}\right)\nonumber\\
 		&\leq3\sum_{l=0}^{n-1}t_{2^{n}-2^{l}+1,2^{n}}\sum_{s=0}^{l}2^{s}\omega_{p}\left(f,\frac{1}{2^{s}}\right)\nonumber\\	
 		&=3\sum_{s=0}^{n-1}2^{s}\omega_{p}\left(f,\frac{1}{2^{s}}\right)\sum_{l=s}^{n-1}t_{2^{n}-2^{l}+1,2^{n}}\nonumber\\	
 		&\leq3\sum_{s=0}^{n-1}(n-s)2^{s}t_{2^{n}-2^{s}+1,2^{n}}\omega_{p}\left(f,\frac{1}{2^{s}}\right).
 	\end{align}
	Taking inequalities \eqref{egy}, \eqref{harom}, \eqref{ketto} and \eqref{negy} together, we see that the theorem is proved.
	\end{proof}
	Theorem \ref{BD4_3} is also a generalization; it based on Theorem 3 from \cite{AT}.
	\begin{theorem}\label{BD4_3}
	For every $n\in\mathbb{P}$, let the finite sequence $\ \{t_{k,n}: 1\leq k\leq n\}$ of non-negative numbers be non-decreasing for all $n$ and we suppose that
	\begin{equation}\label{cond1}
		\sum_{k=1}^{n}t_{k,n}=1
	\end{equation}
	and
	\begin{equation}\label{cond3}
		t_{n,n}=O\left(\frac{1}{n}\right).
	\end{equation}
	Then for any $f\in L_{p}(G)$ for some $1\leq p<\infty$, we have the following inequality
	\begin{align*}
		\left\|\sigma^{T}_{n}(f)-f\right\|_{p}\leq c\sum_{k=0}^{|n|}\frac{2^{k}}{2^{|n|}}\omega_{p}\left(\frac{1}{2^{k}},f\right).
	\end{align*}
	\end{theorem}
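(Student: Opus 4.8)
The plan is to express $\sigma^T_n(f)$ through the Fej\'er means $\sigma_k(f)$ and then apply Lemma \ref{3Ts} term by term, rather than estimating the kernel $K^T_n$ directly. First I would use the elementary identity $S_k(f)=k\sigma_k(f)-(k-1)\sigma_{k-1}(f)$, which is immediate from $k\sigma_k(f)=\sum_{j=1}^k S_j(f)$ (with the convention $\sigma_0(f):=0$), together with summation by parts, to obtain
\[
\sigma^T_n(f)=\sum_{k=1}^n t_{k,n}S_k(f)=\sum_{k=1}^n \Delta t_{k,n}\,k\,\sigma_k(f),
\]
using $t_{n+1,n}:=0$. A further summation by parts (or the observation $k=\#\{j:1\le j\le k\}$) gives $\sum_{k=1}^n \Delta t_{k,n}\,k=\sum_{k=1}^n t_{k,n}=1$ by \eqref{cond1}, hence
\[
\sigma^T_n(f;x)-f(x)=\sum_{k=1}^n \Delta t_{k,n}\,k\,\bigl(\sigma_k(f;x)-f(x)\bigr).
\]

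Since $\{t_{k,n}\}$ is non-decreasing, $\Delta t_{k,n}\le 0$ for $1\le k\le n-1$ while $\Delta t_{n,n}=t_{n,n}\ge 0$, so the triangle inequality yields
\[
\left\|\sigma^T_n(f)-f\right\|_p\le n t_{n,n}\left\|\sigma_n(f)-f\right\|_p+\sum_{k=1}^{n-1}(t_{k+1,n}-t_{k,n})\,k\,\left\|\sigma_k(f)-f\right\|_p.
\]
Into each norm on the right I would insert Lemma \ref{3Ts}, $\|\sigma_k(f)-f\|_p\le 3\sum_{s=0}^{|k|}\frac{2^{s}}{2^{|k|}}\omega_p\left(f,\frac{1}{2^{s}}\right)$, and use $k<2^{|k|+1}$ to get $k\|\sigma_k(f)-f\|_p\le 6\sum_{s=0}^{|k|}2^{s}\omega_p\left(f,\frac{1}{2^{s}}\right)$. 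By condition \eqref{cond3} we have $nt_{n,n}=O(1)$, so the first term is already of the claimed form $c\sum_{s=0}^{|n|}\frac{2^{s}}{2^{|n|}}\omega_p\left(f,\frac{1}{2^{s}}\right)$.

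For the remaining sum I would interchange the order of summation and read off the coefficient of $2^{s}\omega_p\left(f,\frac{1}{2^{s}}\right)$; since $|k|\ge s$ is equivalent to $k\ge 2^{s}$, that coefficient equals $6\sum_{k=2^{s}}^{n-1}(t_{k+1,n}-t_{k,n})=6\,(t_{n,n}-t_{2^{s},n})\le 6t_{n,n}$ by telescoping. Using \eqref{cond3} once more together with $2^{|n|}\le n$ bounds this contribution by $6t_{n,n}\sum_{s=0}^{|n|}2^{s}\omega_p\left(f,\frac{1}{2^{s}}\right)\le c\sum_{s=0}^{|n|}\frac{2^{s}}{2^{|n|}}\omega_p\left(f,\frac{1}{2^{s}}\right)$, and adding the two contributions finishes the proof. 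There is no genuine obstacle once the $\sigma_k$-representation is in hand; the only things to watch are the separate treatment of the boundary index $k=n$ and the bookkeeping in the double sum. Attacking $K^T_n$ directly through the kernel decomposition used for Theorem \ref{BD4_2} would instead produce spurious factors of size $|n|-s$, which is precisely what routing through the Fej\'er means avoids.
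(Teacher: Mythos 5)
Your proposal is correct and follows essentially the same route as the paper: an Abel transformation expressing $\sigma^T_n(f)-f$ as $\sum_k \Delta t_{k,n}\,k\,(\sigma_k(f)-f)$, followed by Lemma \ref{3Ts}, a telescoping sum, and conditions \eqref{cond1}, \eqref{cond3}. The only (harmless) difference is bookkeeping: you interchange the order of summation over all $1\le k\le n-1$ at once, whereas the paper splits that range into $k<2^{|n|}$ and $2^{|n|}\le k\le n-1$ and treats the two pieces separately.
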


	\begin{proof}
	From Abel transformation it is easy to get equality
	\begin{equation}\label{Abel1}
		\sum_{k=1}^{n}t_{k,n}=\sum_{k=1}^{n-1}\Delta t_{k,n}k+t_{n,n}n
	\end{equation}	
	and a connection between $T$ and Fej\'er means
	\begin{equation}\label{Abel2}
		\sigma^{T}_{n}(f)=\sum_{k=1}^{n-1}\Delta t_{k,n}k\sigma_{k}(f)+t_{n,n}n\sigma_{n}(f).
	\end{equation}
	Using \eqref{cond1}, \eqref{Abel1} and \eqref{Abel2} we can conclude equality
	\[
	\sigma^{T}_{n}(f)-f=\sum_{k=1}^{n-1}\Delta t_{k,n}k(\sigma_{k}(f)-f)+t_{n,n}n(\sigma_{n}(f)-f)
	\]
	and 
	\begin{align*}
		\left\|\sigma^{T}_{n}(f)-f\right\|_{p}\leq\sum_{k=1}^{n-1}|\Delta t_{k,n}|k\left\|\sigma_{k}(f)-f\right\|_{p}+t_{n,n}n\left\|\sigma_{n}(f)-f\right\|_{p}=:I+II.
	\end{align*}
	Then
	\[
		I=\sum_{k=1}^{2^{|n|}-1}|\Delta t_{k,n}|k\left\|\sigma_{k}(f)-f\right\|_{p}+\sum_{k=2^{|n|}}^{n-1}|\Delta t_{k,n}|k\left\|\sigma_{k}(f)-f\right\|_{p}=:I_{1}+I_{2}.	
	\]
	Lemma \ref{3Ts}, condition \eqref{cond3} and $|\Delta t_{k,n}|=-\Delta t_{k,n}=t_{k+1,n}-t_{k,n}$ imply
	\begin{align*}
		I_{1}&=\sum_{l=0}^{|n|-1}\sum_{k=2^{l}}^{2^{l+1}-1}|\Delta t_{k,n}|k\left\|\sigma_{k}(f)-f\right\|_{p}\\
		&\leq\sum_{l=0}^{|n|-1}\sum_{k=2^{l}}^{2^{l+1}-1}|\Delta t_{k,n}|k3\sum_{s=0}^{l}\frac{2^{s}}{2^{l}}\omega_{p}\left(f,\frac{1}{2^{s}}\right)\\
		&\leq3\sum_{l=0}^{|n|-1}\frac{2^{l+1}}{2^{l}}\sum_{k=2^{l}}^{2^{l+1}-1}|\Delta t_{k,n}|\sum_{s=0}^{l}{2^{s}}\omega_{p}\left(f,\frac{1}{2^{s}}\right)\\
		&=6\sum_{l=0}^{|n|-1}(t_{2^{l+1},n}-t_{2^{l},n})\sum_{s=0}^{l}{2^{s}}\omega_{p}\left(f,\frac{1}{2^{s}}\right)\\
		&=6\sum_{s=0}^{|n|-1}{2^{s}}\omega_{p}\left(f,\frac{1}{2^{s}}\right)\sum_{l=s}^{|n|-1}(t_{2^{l+1},n}-t_{2^{l},n}
	\end{align*}		
	\begin{align*}	
		&=6\sum_{s=0}^{|n|-1}{2^{s}}\omega_{p}\left(f,\frac{1}{2^{s}}\right)(t_{2^{|n|},n}-t_{2^{s},n})\\
		&\leq6t_{n,n}\sum_{s=0}^{|n|-1}{2^{s}}\omega_{p}\left(f,\frac{1}{2^{s}}\right)\\
		&\leq c\sum_{s=0}^{|n|-1}\frac{2^{s}}{2^{|n|}}\omega_{p}\left(f,\frac{1}{2^{s}}\right).
	\end{align*}
	Since if $2^{|n|}\leq k<n$, then $|k|=|n|$, Lemma \ref{3Ts}, equality \eqref{Abel1} and condition \eqref{cond3} result in that
	\begin{align*}
		I_{2}&\leq\sum_{k=2^{|n|}}^{n-1}|\Delta t_{k,n}|k3\sum_{s=0}^{|k|}\frac{2^{s}}{2^{|k|}}\omega_{p}\left(f,\frac{1}{2^{s}}\right)\\
		&\leq3(nt_{n,n}-1)\sum_{s=0}^{|n|}\frac{2^{s}}{2^{|n|}}\omega_{p}\left(f,\frac{1}{2^{s}}\right)\\
		&\leq c\sum_{s=0}^{|n|}\frac{2^{s}}{2^{|n|}}\omega_{p}\left(f,\frac{1}{2^{s}}\right).
	\end{align*}
	Also similarly
	\begin{align*}
		II&\leq t_{n,n}n3\sum_{s=0}^{|n|}\frac{2^{s}}{2^{|n|}}\omega_{p}\left(f,\frac{1}{2^{s}}\right)\\
		&\leq c\sum_{s=0}^{|n|}\frac{2^{s}}{2^{|n|}}\omega_{p}\left(f,\frac{1}{2^{s}}\right).
	\end{align*}
	This completes the proof of the theorem.  
	\end{proof}

\end{document}